\newcommand{\tr}{\triangleright}
\newcommand{\btr}{\blacktriangleright}
\newcommand{\cur}{\curvearrowright}
\def\g{{\mathfrak g}}
\newtheorem{theorem}{Theorem}
\newtheorem{cor}{Corollary}
\newtheorem{lemma}{Lemma}
\newtheorem{prop}{Proposition}
\newtheorem{rmk}{Remark}
\newtheorem{example}{Example}
\theoremstyle{definition}
\newtheorem{mydef}{Definition} 
\theoremstyle{definition}
\title{The Magnus expansion and Post-Lie algebras}
\author{Charles Curry, Kurusch Ebrahimi-Fard, Brynjulf Owren}
\address{Department of Mathematical Sciences, 
		Norwegian University of Science and Technology (NTNU), 
		7491 Trondheim, Norway.}
\email{charles.curry@ntnu.no} \email{kurusch.ebrahimi-fard@ntnu.no} \email{brynjulf.owren@ntnu.no}
\begin{document}


\begin{abstract}
We relate the classical and post-Lie Magnus expansions. Intertwining algebraic and geometric arguments allows to placing the classical Magnus expansion in the context of Lie group integrators.
\end{abstract}


\maketitle



{\tiny{\bf Keywords}: Magnus expansion; Lie group integration; pre-Lie algebra; post-Lie algebra.}

{\tiny{\bf MSC Classification}:Primary MSC 34A26, 34G10, 65L05}

\vspace{1cm}


\section{Introduction}
\label{sect:intro}

Consider the classical non-autonomous linear initial value problem
\begin{equation}
\label{matrix:ode}
	\dot{Y}(t) = A(t) Y(t),\quad Y(0)=y_0,
\end{equation}
where $Y(t)\in GL(n,\mathbb{R})$, and $A(t)\in\mathfrak{gl}(n,\mathbb{R})$. It is easy to see that if $A(t)=A$ is constant, the solution of \eqref{matrix:ode} is given by a matrix exponential, $Y(t)=\exp(tA)y_0$. In the non-autonomous case the exponential $\exp(\int_0^t A(s)ds)y_0$ fails in general to solve \eqref{matrix:ode} due to the non-commutativity of $A(t)$ at different times \cite{Epstein_63}. However, it turns out that by adding particular Lie polynomials to the integral $\int_0^t A(s)ds$ in the exponential $\exp(\int_0^t A(s)ds)y_0$, one can approximate the solution $Y(t)$ of \eqref{matrix:ode} in the non-autonomous case to an arbitrary high precision. Magnus \cite{Magnus1954} understood how to obtain the exact solution of \eqref{matrix:ode} by describing the Lie series yielding an exponential solution of \eqref{matrix:ode}. More precisely, he showed that the logarithm of $Y(t)$ can be characterised as the solution of a particular differential equation summarised in the next theorem.

\begin{theorem}[Classical Magnus expansion, \cite{Magnus1954}] \label{cme}
The solution of \eqref{matrix:ode} can be expressed in terms of a matrix exponential,
\[
	Y(t) = \exp\big(\Omega(A)(t)\big)y_0,
\]
where $\Omega(A)(t)$ is a matrix-valued function solving the differential equation
\begin{align}
	\dot{\Omega}(A)(t) = \mathrm{dexp}_{\Omega(A)}^{-1}(A)(t) 
	&:= \frac{\mathrm{ad}_{\Omega(A)}}{\exp(\mathrm{ad}_{\Omega(A)})-1}(A)(t) \label{firstP}\\
	&= \sum_{n\geq 0} \frac{B_n}{n!}\mathrm{ad}_{{\Omega}(A)}^{(n)}(A)(t)\label{cme:omega}
\end{align}
with initial value $\Omega(0)=0$, and with $B_n$ denoting the $n$th Bernoulli number.
\end{theorem}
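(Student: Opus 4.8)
The plan is to derive the differential equation for $\Omega(A)(t)$ by differentiating the ansatz $Y(t) = \exp(\Omega(A)(t))y_0$ and matching it against the original equation. The core analytic tool I would establish first is the formula for the derivative of the matrix exponential. Writing $\Omega = \Omega(A)(t)$ for brevity, I would show that
\begin{equation*}
\frac{d}{dt}\exp(\Omega) = \left(\int_0^1 \exp(s\,\mathrm{ad}_{\Omega})\,ds \ \dot\Omega\right)\exp(\Omega) = \mathrm{dexp}_{\Omega}(\dot\Omega)\,\exp(\Omega),
\end{equation*}
where $\mathrm{dexp}_{\Omega} = \frac{\exp(\mathrm{ad}_{\Omega})-1}{\mathrm{ad}_{\Omega}}$ acting on $\dot\Omega$. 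The standard way to obtain this is to introduce the two-parameter family $F(s,t) = \exp(s\Omega(t))$ and compute $\partial_s\partial_t F$ versus $\partial_t\partial_s F$, integrating the resulting first-order linear equation in $s$ from $0$ to $1$; this yields the integral representation of $\mathrm{dexp}$, whose series expansion $\sum_{n\geq 0}\frac{1}{(n+1)!}\mathrm{ad}_\Omega^{(n)}$ follows by expanding the exponential inside the integral.

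Next I would substitute. Since $Y(t) = \exp(\Omega)y_0$ and $\dot Y = A Y$, the $\mathrm{dexp}$ formula gives $\dot Y = \mathrm{dexp}_\Omega(\dot\Omega)\exp(\Omega)y_0 = \mathrm{dexp}_\Omega(\dot\Omega)\,Y$, so that $A = \mathrm{dexp}_\Omega(\dot\Omega)$. The task then reduces to inverting the operator $\mathrm{dexp}_\Omega$ to solve for $\dot\Omega$. Formally this is immediate: $\dot\Omega = \mathrm{dexp}_\Omega^{-1}(A)$, and since $\mathrm{dexp}_\Omega$ corresponds to the analytic function $z\mapsto (e^z-1)/z$ evaluated at $z = \mathrm{ad}_\Omega$, its inverse corresponds to the reciprocal function $z\mapsto z/(e^z-1)$, which is precisely the generating function of the Bernoulli numbers, $\frac{z}{e^z-1} = \sum_{n\geq 0}\frac{B_n}{n!}z^n$. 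Substituting $z = \mathrm{ad}_\Omega$ recovers both displayed forms \eqref{firstP} and \eqref{cme:omega}.

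The initial condition $\Omega(0) = 0$ follows from $Y(0) = \exp(\Omega(0))y_0 = y_0$, which forces $\exp(\Omega(0)) = \mathrm{id}$; choosing the branch with $\Omega(0)=0$ is the natural normalization and guarantees $\mathrm{ad}_{\Omega(0)} = 0$ lies well within the radius of convergence.

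The main obstacle is \emph{invertibility and convergence}, not the formal manipulation. The operator $\mathrm{dexp}_\Omega$ is invertible precisely when $\mathrm{ad}_\Omega$ has no eigenvalue in $2\pi i\,\mathbb{Z}\setminus\{0\}$, equivalently when the eigenvalues of $\mathrm{ad}_\Omega$ avoid the zeros of $(e^z-1)/z$; for $t$ small enough $\Omega$ stays near $0$ so this holds, and the Bernoulli series $\sum_n \frac{B_n}{n!}\mathrm{ad}_\Omega^{(n)}$ converges since the radius of convergence of $z/(e^z-1)$ is $2\pi$. I would therefore state the result as holding for $t$ in a neighbourhood of $0$ (or under a smallness hypothesis on $\int_0^t\|A(s)\|\,ds$), and note that existence of $\Omega$ on that interval follows from the Picard–Lindelöf theorem applied to the now-explicit vector field $\dot\Omega = \mathrm{dexp}_\Omega^{-1}(A)$, whose right-hand side is analytic in $\Omega$ and continuous in $t$.
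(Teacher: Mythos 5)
The paper does not prove this theorem: it is stated as classical background, cited to Magnus (1954), with the remark that the second equality in \eqref{firstP} is ``classical''. Your argument is the standard and correct derivation --- establish $\frac{d}{dt}\exp(\Omega)=\mathrm{dexp}_{\Omega}(\dot\Omega)\exp(\Omega)$ via the two-parameter family $\exp(s\Omega(t))$, match against $\dot Y=AY$ to get $A=\mathrm{dexp}_{\Omega}(\dot\Omega)$, and invert using the Bernoulli generating function $z/(e^z-1)$ --- and your attention to invertibility of $\mathrm{dexp}_{\Omega}$ and to the local-in-$t$ nature of the statement is exactly the care the formal manipulation requires. The only point worth making explicit is the direction of the logic: having obtained $\Omega$ on a small interval by Picard--Lindel\"of applied to $\dot\Omega=\mathrm{dexp}_{\Omega}^{-1}(A)$, one should note that the computation reverses (since $\mathrm{dexp}_{\Omega}\circ\mathrm{dexp}_{\Omega}^{-1}=\mathrm{id}$ there), so that $\exp(\Omega(t))y_0$ genuinely solves \eqref{matrix:ode} and coincides with $Y(t)$ by uniqueness; your write-up contains all the ingredients for this, so the proposal stands as a complete proof of the result the paper takes for granted.
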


Recall that $\mathrm{ad}_{U}^{(n)}(V):=[U,\mathrm{ad}_{U}^{(n-1)}(V)]$, and $\mathrm{ad}_{U}^{(0)}(V):=V$. We remark that $\mathrm{dexp}_{\Omega(A)}$ denotes the derivative of the exponential mapping at the point $\Omega(A)$. This is an invertible linear mapping for which the second equality in \eqref{firstP} above is classical.

Introducing a formal parameter $\lambda$, we see that from integrating and iteratively expanding \eqref{cme:omega} an infinite powers series $\Omega(\lambda A)(t)=\sum_{j>0}\Omega_j(A)(t)\lambda^j$ of nested integrals over iterated matrix commutators of the function $A(t)$ results. The coefficients of the first three orders of $\Omega(\lambda A)(t)$ are
\begin{align*}
	\Omega_1(A)(t) &= \int_0^t  dt_1 A(t_1)\\  
	\Omega_2(A)(t) &= - \frac{1}{2}\int_0^t dt_1 \int_0^{t_1} dt_2 [A(t_2),A(t_1)]\\
	\Omega_3(A)(t) &=  \frac{1}{4} \int_0^t dt_1 \int_0^{t_1} dt_2 \int_0^{t_2} dt_3 [[A(t_3),A(t_2)],A(t_1)]\\
				 &\qquad     +\frac{1}{12} \int_0^t dt_1 \int_0^{t_1} dt_2 \int_0^{t_1} dt_3 [A(t_2),[A(t_3),A(t_1)]].
\end{align*}
Over the decades, the Magnus expansion $\Omega(A)(t)$ has been studied in great detail from the point of view of applied mathematics \cite{BCOR2009,Iserles02,MielPleb70,MuOw99,Wilcox67}. More recently, deeper mathematical fine structures of the Magnus expansion have been unfolded using more advanced algebraic tools. See, e.g., the references \cite{BandSch,Chapoton09,CP13,EFMMK17,EM2009}. 

These mathematical structures of the Magnus expansion arise through an observation that links the above series, $\Omega(\lambda A)(t)$, to the general solution of certain fixed point equations known under the name pre-Lie Magnus expansion. Indeed, for two matrix-valued functions $U$ and $V$, we define the product \cite{AG1981,EM2009} 
\begin{equation}
\label{pre-Lie-basic}
	(U \cur V)(t) := \int_0^t ds[U(s),V(t)] .
\end{equation}
One verifies that the following identity 
\begin{equation}
\label{plid}
	(U \cur V) \cur W - U \cur (V \cur W) = (V \cur U) \cur W - V \cur (U \cur W)
\end{equation}
is satisfied, which is known as left pre-Lie relation. The product \eqref{pre-Lie-basic} therefore defines a pre-Lie algebra \cite{Cartier2011,Manchon2011} on the space of matrix-valued functions. Note that \eqref{plid} implies that the commutator $\llbracket U,V \rrbracket:=U \cur V - V \cur U$ yields a Lie bracket, i.e., pre-Lie algebras are Lie admissible. Let $\ell^{(n)}_{U\cur}(V) :=U \cur (\ell^{(n-1)}_{U\cur}(V))$, where {$\ell^{(0)}_{U\cur}(V):=V$}. Using the product \eqref{pre-Lie-basic} in expansion \eqref{cme:omega} yields the simple presentation of the Magnus expansion in terms of its underlying pre-Lie product \eqref{pre-Lie-basic}.

\begin{cor}[Pre-Lie Magnus expansion, \cite{AG1981,EM2009}] \label{cor:preLieMag}
The Magnus expansion solves the fixed point equation
\begin{equation}
\label{plm}
	\dot{\Omega}(A)(t) = \sum_{n \ge 0} \frac{B_n}{n!} \ell^{(n)}_{\dot{\Omega}(A)\cur}(A)(t).
\end{equation}
\end{cor}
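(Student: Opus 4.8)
The plan is to obtain the pre-Lie presentation \eqref{plm} directly from the classical expansion \eqref{cme:omega} of Theorem \ref{cme}, by replacing each iterated adjoint action $\mathrm{ad}_{\Omega(A)}^{(n)}(A)$ by the iterated pre-Lie left multiplication $\ell^{(n)}_{\dot{\Omega}(A)\cur}(A)$ term by term. The entire argument reduces to a single structural identity relating the operator $\mathrm{ad}_{\Omega(A)}$ to the operator $V\mapsto \dot{\Omega}(A)\cur V$, after which \eqref{plm} follows by a routine induction.

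First I would exploit the initial condition $\Omega(0)=0$. Abbreviating $\Omega=\Omega(A)$, integration gives $\Omega(t)=\int_0^t \dot{\Omega}(s)\,ds$, so that for an arbitrary matrix-valued function $V$,
\[
	\mathrm{ad}_{\Omega}(V)(t) = [\Omega(t),V(t)] = \Big[\int_0^t \dot{\Omega}(s)\,ds,\,V(t)\Big] = \int_0^t [\dot{\Omega}(s),V(t)]\,ds = (\dot{\Omega}\cur V)(t),
\]
where the bracket is pulled inside the integral by bilinearity and the result is recognised as the pre-Lie product \eqref{pre-Lie-basic}. In the notation of the statement this is the key identity $\mathrm{ad}_{\Omega}(V)=\ell^{(1)}_{\dot{\Omega}\cur}(V)$, valid for every $V$.

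I would then promote this to all iteration orders by induction on $n$. The base case $n=0$ is the definitional equality $\mathrm{ad}_{\Omega}^{(0)}(A)=A=\ell^{(0)}_{\dot{\Omega}\cur}(A)$. For the inductive step, assuming $\mathrm{ad}_{\Omega}^{(n-1)}(A)=\ell^{(n-1)}_{\dot{\Omega}\cur}(A)$, I apply the key identity with $V:=\mathrm{ad}_{\Omega}^{(n-1)}(A)$:
\[
	\mathrm{ad}_{\Omega}^{(n)}(A) = \mathrm{ad}_{\Omega}\big(\mathrm{ad}_{\Omega}^{(n-1)}(A)\big) = \dot{\Omega}\cur \mathrm{ad}_{\Omega}^{(n-1)}(A) = \dot{\Omega}\cur \ell^{(n-1)}_{\dot{\Omega}\cur}(A) = \ell^{(n)}_{\dot{\Omega}\cur}(A).
\]
Substituting these identities term by term into \eqref{cme:omega} turns the classical series into \eqref{plm}, which is the claim.

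The computation is elementary, so the only point deserving care — and the closest thing to an obstacle — is the justification of the key identity: one must use that $\Omega(0)=0$, so that $\Omega$ is genuinely the primitive of $\dot{\Omega}$, and that the second argument $V(t)$, being constant in the integration variable $s$, passes freely through the integral sign. These two observations are exactly what convert the ``static'' adjoint operator $\mathrm{ad}_{\Omega}$ into the ``integral'' pre-Lie operator $\dot{\Omega}\cur(-)$, and hence recast the classical expansion in its pre-Lie form.
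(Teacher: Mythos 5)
Your proposal is correct and is precisely the argument the paper intends: the paper offers no detailed proof beyond the remark that ``using the product \eqref{pre-Lie-basic} in expansion \eqref{cme:omega} yields'' the result, and your key identity $\mathrm{ad}_{\Omega}(V)=\dot{\Omega}\cur V$ (valid because $\Omega(0)=0$ makes $\Omega$ the primitive of $\dot{\Omega}$), together with the induction on $n$, is exactly the term-by-term substitution being alluded to. Nothing is missing.
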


In \cite{EM2009} it was observed that the pre-Lie relation \eqref{plid} allows to reduce the number of terms in the expansion $\Omega(\lambda A)(t)$. For instance, at order four, one finds that
$$
	\Omega_4(A) = \frac{1}{6}  ((A \cur A) \cur A ) \cur A + \frac{1}{12}  A \cur ((A \cur A ) \cur A).
$$
For details and more recent results we refer the reader to \cite{AlK_14,AlKManPat_17}.

\medskip

In this article, we show that by an appropriate `autonomization' the linear initial value problem \eqref{matrix:ode} can be presented as a Lie group integration problem along the lines of \cite{Iserles00}. In this context,  post-Lie algebras \cite{Voss1,LMK13} occur naturally replacing pre-Lie algebras. The classical Magnus expansion \eqref{cme:omega} is seen to be a special case of the so-called post-Lie Magnus expansion introduced in \cite{Voss1,EFLMK15,EFLMMK15}. This observation motivates to compare explicitly certain Runge--Kutta--Munthe-Kaas schemes with Magnus integrator schemes. It is hoped that this comment may help to relate the theories of Lie group integrators \cite{CHO2014,Iserles00} and Magnus integrators \cite{BCOR2009}, which have hitherto developed on separate lines. 

\medskip

The paper is organised as follows. In Section \ref{sect:prepost} we recall the notions of pre- and post-Lie algebras and show how the pre- and post-Lie Magnus expansions are naturally defined. Section \ref{sect:LGIMag} shows how Lie group integration and the post-Lie Magnus expansion can be related to the classical Magnus expansion. Section \ref{sect:RKMKMag} compares Runge--Kutta versus Magnus integration methods, and ends with a remark regarding continuous stage methods and a link to the classical Magnus expansion.

\medskip

\noindent {\bf{Acknowledgements}}: This research was supported by The Research Council of Norway, the FRIPRO programme, grant No.~231632 and by the European Union's Horizon 2020 research and innovation programme under the Marie Sklodowska--Curie grant agreement No.~691070. Finally, we thank the referees for pertinent suggestions and helpful remarks that resulted in an improved presentation of our work.


\section{Post- and Pre-Lie Magnus expansions}
\label{sect:prepost}

We briefly recall the algebraic notions of post- and pre-Lie algebras \cite{Cartier2011,Voss1,LMK13,Manchon2011}. All algebraic structures are considered over a base field $k$ of characteristic zero. For a $k$-vector space $A$ with general binary product $x \bullet y:=m_A(x \otimes y)$, $x,y \in A$, we define the associator  
$$
	{\rm{a}}_{\bullet}(x,y,z):=x \bullet (y \bullet z) - (x \bullet y) \bullet z.
$$
If $(A,\bullet)$ is an associative algebra, then ${\rm{a}}_{\bullet}(x,y,z)=0$. In our work, non-associative algebras will play a central role and the next definition provides the first example.  

\begin{mydef} \label{def:postLie} 
A \emph{post-Lie algebra} $(\mathfrak{g}, [\cdot,\cdot], \triangleright)$ consists of a Lie algebra $(\mathfrak g, [\cdot,\cdot])$ and a product $\triangleright  : {\mathfrak g} \otimes {\mathfrak g} \rightarrow \mathfrak g$ such that the following relations hold for all elements $x,y,z \in \mathfrak g$
\begin{align}
\label{postLie1}
	x \triangleright [y,z] &= [x\triangleright y , z] + [y , x \triangleright z],\\
\label{postLie2}
	[x,y] \triangleright z &= {\rm{a}}_{\triangleright  }(x,y,z) - {\rm{a}}_{\triangleright}(y,x,z).
\end{align}  
\end{mydef}

\begin{prop} \cite{LMK13} \label{prop:Liebracket}
Let $(\mathfrak g, [\cdot, \cdot], \triangleright )$ be a post-Lie algebra. For $x, y \in \mathfrak g$ the bracket
\begin{equation}
\label{postLie3}
	\llbracket x,y \rrbracket := x \tr y - y \tr x + [x,y]
\end{equation}
satisfies the Jacobi identity. The resulting Lie algebra is denoted $(\overline{\mathfrak g},\llbracket \cdot,\cdot \rrbracket)$. 
\end{prop}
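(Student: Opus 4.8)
The plan is to verify directly that the cyclic sum of $\llbracket \llbracket x,y\rrbracket ,z\rrbracket$ over $(x,y,z)$ vanishes; antisymmetry of $\llbracket \cdot,\cdot \rrbracket$ is immediate from \eqref{postLie3}, since both $x\tr y - y\tr x$ and $[x,y]$ are antisymmetric. First I would split the double bracket as
$\llbracket \llbracket x,y\rrbracket ,z\rrbracket = \llbracket x,y\rrbracket \tr z - z\tr \llbracket x,y\rrbracket + [\llbracket x,y\rrbracket ,z]$
and expand each of the three summands. The upshot is that everything reduces to three structurally distinct families of terms: purely \emph{nested} products of the form $a\tr(b\tr c)$, \emph{mixed} terms in which a single $\tr$ sits inside a Lie bracket, and the purely Lie-theoretic term $[[x,y],z]$.

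The decisive simplification happens in the first summand. Writing $\llbracket x,y\rrbracket \tr z = (x\tr y)\tr z - (y\tr x)\tr z + [x,y]\tr z$ and substituting \eqref{postLie2} for $[x,y]\tr z$, the four left-nested associator contributions $(x\tr y)\tr z$ and $(y\tr x)\tr z$ cancel exactly, leaving only $x\tr(y\tr z) - y\tr(x\tr z)$. This is precisely the role of \eqref{postLie2}: it removes all left-nested products, which are exactly the monomials that would otherwise fail to cancel under cyclic summation. For the second summand I would use the derivation property \eqref{postLie1} to rewrite $z\tr [x,y] = [z\tr x,y] + [x,z\tr y]$, producing mixed terms, while the third summand expands into two further mixed terms together with $[[x,y],z]$. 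After these substitutions the nested part of $\llbracket \llbracket x,y\rrbracket ,z\rrbracket$ is $x\tr(y\tr z) - y\tr(x\tr z) - z\tr(x\tr y) + z\tr(y\tr x)$.

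With everything reduced to nested products, mixed terms, and $[[x,y],z]$, I would then sum cyclically over the three rotations of $(x,y,z)$ and treat each family separately. The cyclic sum of $[[x,y],z]$ vanishes by the Jacobi identity of the underlying Lie bracket $[\cdot,\cdot]$. Each nested monomial $a\tr(b\tr c)$ occurs exactly once with a $+$ sign and once with a $-$ sign across the three rotations, so the nested family cancels in pairs. The mixed family cancels similarly once antisymmetry of $[\cdot,\cdot]$ is used to align the summands, for instance $-[x,z\tr y] = [z\tr y,x]$.

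I expect the only genuine obstacle to be bookkeeping: tracking the signed monomials of each type across the three cyclic rotations and confirming that nothing in the mixed block is left unpaired. The conceptual content is carried entirely by the cancellation in the second paragraph — once \eqref{postLie2} has eliminated the left-nested associators, the remaining cancellations are forced by antisymmetry and the classical Jacobi identity, so no further structure is needed.
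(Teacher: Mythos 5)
The paper states this proposition without proof, simply citing \cite{LMK13}, so there is no in-text argument to compare against; your blind verification is the standard direct one and it is correct. I checked the computation: after \eqref{postLie2} cancels the left-nested terms $(x\tr y)\tr z$ and $(y\tr x)\tr z$ in $\llbracket x,y\rrbracket\tr z$, the cyclic sum of the nested block, of the mixed block (after rewriting $-[x,z\tr y]=[z\tr y,x]$), and of $[[x,y],z]$ each vanishes exactly as you describe, so the proof goes through with no gaps.
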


We remark that the above notion of post-Lie algebra has an adjoint $(\mathfrak g,-[\cdot,\cdot],\btr)$. 

\begin{prop} \cite{LMK13} \label{prop:adjoint}
If $(\mathfrak{g}, [\cdot,\cdot], \triangleright)$ is a post-Lie algebra, then $(\mathfrak g, -[\cdot,\cdot],\btr)$, where
\[
	x \btr y := x \triangleright y + [x,y],
\]
is also a post-Lie algebra. 
\end{prop}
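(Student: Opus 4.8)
The plan is to verify directly the two defining relations \eqref{postLie1} and \eqref{postLie2} for the candidate structure $(\mathfrak g, -[\cdot,\cdot], \btr)$. Since both the product $\btr$ and the bracket $-[\cdot,\cdot]$ are bilinear, everything reduces to a finite expansion using $x \btr y = x \tr y + [x,y]$, together with the original axioms \eqref{postLie1}, \eqref{postLie2} for $(\mathfrak g,[\cdot,\cdot],\tr)$ and the Jacobi identity for $[\cdot,\cdot]$ in its derivation form $[x,[y,z]] = [[x,y],z] + [y,[x,z]]$.

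First I would treat \eqref{postLie1}. Written for the adjoint structure it reads $x \btr (-[y,z]) = -[x \btr y, z] - [y, x \btr z]$, which by bilinearity is equivalent to $x \btr [y,z] = [x \btr y, z] + [y, x \btr z]$. Expanding the left side gives $x \tr [y,z] + [x,[y,z]]$. Expanding the right side and recombining the $\tr$-terms via \eqref{postLie1} produces $x \tr [y,z] + [[x,y],z] + [y,[x,z]]$; cancelling the common $x \tr [y,z]$ leaves exactly the Jacobi identity, so the first relation holds for free.

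The bulk of the work, and the main obstacle, is \eqref{postLie2}. Here I would expand the associator ${\rm a}_{\btr}(x,y,z) = x \btr (y \btr z) - (x \btr y) \btr z$ in full and organize the result as ${\rm a}_{\tr}(x,y,z)$, plus the mixed commutator terms $[x, y \tr z]$ and $[y, x \tr z]$ (the latter obtained from $x \tr [y,z] - [x \tr y, z] = [y, x \tr z]$ by \eqref{postLie1}), plus the remainder $[x,[y,z]] - [x,y]\tr z - [[x,y],z]$. Antisymmetrizing in $x$ and $y$, the symmetric mixed terms $[x, y \tr z] + [y, x \tr z]$ cancel, the ${\rm a}_{\tr}$-part contributes $[x,y]\tr z$ by \eqref{postLie2}, and the double-bracket remainder $[x,[y,z]] - [y,[x,z]]$ collapses to $[[x,y],z]$ by Jacobi. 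Careful sign bookkeeping then shows the surviving terms equal $-[x,y]\tr z - [[x,y],z]$, which is precisely the left-hand side $-[x,y]\btr z$ of the relation.

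The delicate point throughout is tracking the sign reversal of the bracket together with the factor-of-two cancellations among the nested commutator terms; getting these to collapse correctly through Jacobi is where an error is most likely. As a consistency check, one can note that applying Proposition \ref{prop:adjoint} twice returns the original post-Lie algebra, since $x \btr y + (-[x,y]) = x \tr y$ and the bracket is reversed back to $[\cdot,\cdot]$, confirming that the construction is an involution exchanging $\tr$ and $\btr$.
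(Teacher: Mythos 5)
Your proposal is correct and follows essentially the same route as the paper: a direct expansion of $x \btr y = x \tr y + [x,y]$ in both axioms, reducing the first to the Jacobi identity via \eqref{postLie1} and the second to \eqref{postLie2} plus Jacobi after antisymmetrizing the associators. The sign bookkeeping you describe checks out, and the closing involution observation is a nice consistency check not present in the paper.
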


\begin{proof}
We repeat the proof as it is instructive to get acquainted with this set of identities. Indeed, we have 
\allowdisplaybreaks
\begin{eqnarray*}
	- x \btr [y,z] &=& - x \triangleright [y,z] - [x,[y,z]]\\
				&=&- [x\triangleright y,z] - [y,x\triangleright z] - [[x,y],z] - [y,[x,z]]\\
				&=&-[x \btr y,z]-[y,x \btr z].
\end{eqnarray*}
Next we calculate
\allowdisplaybreaks
$$
	(y \btr x) \btr z
	=(y \triangleright x) \triangleright z + [y,x] \triangleright z + [y \triangleright x,z] + [[y,x],z]
$$
and 
\begin{eqnarray*}
	y \btr (x \btr z)
	&=&y \triangleright (x\triangleright z) +  [y \triangleright x,z] +  [x,y \triangleright z]  + [y, x\triangleright z] + [y,[x,z]].
\end{eqnarray*}
The difference of the two terms yields the associator
$$
	{\rm{a}}_{\btr}(y,x,z) = {\rm{a}}_{\triangleright }(y,x,z) +  [x,y \triangleright z]  + [y, x\triangleright z] 
	+ [y,[x,z]] - [y,x] \triangleright z - [[y,x],z].
$$
Together with ${\rm{a}}_{\btr}(x,y,z)$ this gives 
\[
	{\rm{a}}_{\btr}(x,y,z) - {\rm{a}}_{\btr}(y,x,z) = -[x,y] \triangleright z - [[x,y],z]=-[x,y] \btr z.
\]
\end{proof}

\begin{example}\label{exm:connection}{\rm{
Let $\mathcal{X}(M)$ be the space of vector fields on a manifold $M$, equipped with a linear connection. For $X,Y \in \mathcal{X}(M)$, the covariant derivative of $Y$ in the direction of $X$, denoted $\nabla_X Y$, defines an $\mathbb{R}$-linear, non-associative binary product $X\triangleright Y:=\nabla_X Y$ on $\mathcal{X}(M)$. The torsion $T$ is a skew-symmetric tensor field of type $(1,2)$ {defined by}
\begin{align}
\label{torsion}
	T(X,Y) := X\triangleright Y - Y\triangleright X - \llbracket X,Y \rrbracket,
\end{align}
where the bracket $\llbracket \cdot,\cdot \rrbracket$ on the right is the Jacobi bracket of vector fields. The torsion admits a covariant differential $\nabla T$, a tensor field of type $(1,3)$. Recall the definition of the curvature tensor $R$, a tensor field of type $(1,3)$ given by
\[
	R(X,Y)Z := X\triangleright(Y\triangleright Z) - Y\triangleright(X\triangleright Z) 
	- \llbracket X,Y \rrbracket \triangleright Z.
\]
In the case of a flat connection with constant torsion, i.e., for $R=0=\nabla T$, we have that $(\mathcal{X}(M),-T(\cdot,\cdot),\triangleright)$ defines a post-Lie algebra. Indeed, the first Bianchi identity shows that $-T(\cdot,\cdot)$ obeys the Jacobi identity; as $T$ is skew-symmetric it therefore defines a Lie bracket. Moreover, flatness is equivalent to $(\ref{postLie2})$ as can be seen by inserting \eqref{torsion} into the statement $R=0$, whilst $(\ref{postLie1})$ follows from the definition of the covariant differential of $T$:
\[
	0 = \nabla T (Y,Z;X) = X \triangleright T(Y,Z) 
						- T(Y,X \triangleright Z) 
							- T(X \triangleright Y,Z).
\]
The formalism of post-Lie algebras assists greatly in understanding the interplay between covariant derivatives and integral curves of vector fields, which is central to the study of numerical analysis on manifolds.}}
\end{example}

Recall that for a Lie algebra $(\g,[\cdot,\cdot])$, its enveloping algebra is an associative algebra $(U(\g),\cdot)$ such that $\g \subset U(\g)$ and $[a,b] = a \cdot b - b \cdot a$ in $U(\g)$ \cite{Meinrenken2013}. As a Lie algebra $\g$ with product $\tr$, the enveloping algebra of a post-Lie algebra $(\mathfrak{\g}, [\cdot,\cdot], \tr)$ is $U(\g)$ together with an extension of the post-Lie product $\tr$ onto $U(\g)$ defined such that for all elements $u \in \g$ and $A,B\in U(\g)$
\begin{align*}
	u \tr (A \cdot B) &= (u \tr A)\cdot B + A \cdot (u \tr B)\\
	(u \cdot A)\tr B &= a_\tr(u,A,B).
\end{align*}
Proposition~\ref{prop:Liebracket} implies that a post-Lie algebra $(\mathfrak{\g}, [\cdot,\cdot], \tr)$ has a second Lie algebra structure $\bar{\g}$ associated to the bracket $\llbracket\cdot,\cdot\rrbracket$ defined in \eqref{postLie3}. As a vector space, its enveloping algebra $U(\bar{\g})$ is isomorphic to $U(\g)$. Moreover, the post-Lie product of $(\mathfrak{\g}, [\cdot,\cdot], \tr)$ lifted to $U(\g)$ permits to define another associative product on $U(\g)$, such that $U(\g)$ with this new product is isomorphic as a Hopf algebra to $U(\bar{\g})$. See \cite{EFLMK15,EFM2018} for details. By some abuse of notation we denote the lifted post-Lie product and new associative product on $U(\g)$ by $\tr$ respectively $\ast$. One of the central results is the following 

\begin{prop}\label{prop:product}\cite{EFLMK15} 
For $A,B,C \in U(\g)$ we have 
\begin{equation}
\label{product}
	(A * B) \tr C = A \tr (B \tr C). 
\end{equation}
\end{prop}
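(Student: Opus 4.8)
The plan is to establish the associativity-type identity $(A * B) \tr C = A \tr (B \tr C)$ by reducing it, via linearity and the defining properties of the lifted product, to the case where the arguments are generators of the post-Lie algebra $\g$, and then to exploit the recursive structure of $\ast$ and $\tr$ on the enveloping algebra $U(\g)$. The key structural input is the construction recalled just before the statement: the product $\ast$ on $U(\g)$ is the \emph{associative} product obtained by lifting $\tr$, and it is characterised by the requirement that $(U(\g), \ast)$ be isomorphic as a Hopf algebra to $U(\bar\g)$. Concretely, $\ast$ is defined recursively so that $A \ast B = A \cdot B$ when $A$ is a scalar, and on generators $u \in \g$ one has a relation of the form $u \ast B = u \cdot B + u \tr B$, with the general formula extended by requiring compatibility with the coproduct. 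The identity \eqref{product} is then precisely the statement that the lifted action $\tr : U(\g) \otimes U(\g) \to U(\g)$ is a \emph{left action} of the associative algebra $(U(\g), \ast)$ on $U(\g)$.

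First I would set up the induction on the degree (word-length) of $A$ in $U(\g)$. For the base case $A \in k$ a scalar, both sides reduce to $A\,(B \tr C)$ since $A \ast B = A\cdot B = A B$ and scalars act trivially under $\tr$, so the identity holds. For the inductive step I would write $A = u \cdot A'$ with $u \in \g$ a generator and $A' \in U(\g)$ of strictly lower degree. The strategy is to expand $(u \cdot A') \ast B$ using the recursive definition of $\ast$ together with its compatibility with the concatenation product $\cdot$ and the coproduct, then apply $\tr \, C$ and compare with $u \cdot A' \,\tr\,(B \tr C)$ expanded via the two defining relations for the lifted post-Lie product on $U(\g)$, namely the Leibniz rule $u \tr (X \cdot Y) = (u \tr X)\cdot Y + X \cdot (u \tr Y)$ and the relation $(u \cdot X) \tr Y = \mathrm{a}_\tr(u,X,Y)$. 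The associator relation is exactly what converts a left-factor $u \cdot A'$ acting by $\tr$ into a nested action, which is the mechanism that produces the right-hand side $A \tr (B \tr C)$.

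The main obstacle I anticipate is bookkeeping the interaction between the three products $\cdot$, $\ast$, and $\tr$ on $U(\g)$ simultaneously: the definition of $\ast$ involves the coproduct of $U(\g)$ through Sweedler-type sums, and one must carefully track how the Leibniz rule distributes the action of $\tr$ over these sums. The cleanest route is likely not to manipulate explicit formulas at all, but rather to invoke the Hopf-algebraic isomorphism: since $(U(\g),\ast) \cong U(\bar\g)$ as Hopf algebras and the lifted $\tr$ is \emph{constructed} to be the transported action, \eqref{product} holds tautologically on the image of generators and extends by the fact that both sides are algebra homomorphisms in $A$ with respect to $\ast$ — that is, one verifies $((A_1 \ast A_2) \ast B)\tr C = A_1 \tr (A_2 \tr (B \tr C))$ is consistent with associativity of $\ast$. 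I would therefore frame the proof as: (i) check the identity on generators $u \in \g$ directly from the two defining axioms, then (ii) bootstrap to all of $U(\g)$ by showing both sides respect $\ast$-multiplication in the first argument, where associativity of $\ast$ guarantees the two iterated expressions agree. The delicate point — and where I would spend the most care — is verifying step (ii) without circularity, since the associativity of $\ast$ is itself a consequence of the same lifting construction.
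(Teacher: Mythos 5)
First, a point of reference: the paper itself offers no proof of Proposition~\ref{prop:product} --- it is quoted verbatim from \cite{EFLMK15} --- so there is no in-paper argument to compare against, and your proposal must be judged on its own terms. The part of your plan that works is the generator case, and you should simply write it out rather than gesture at it: for $u \in \g$ the coproduct formula gives $u \ast B = u \cdot B + u \tr B$, hence
\begin{equation*}
	(u \ast B) \tr C \;=\; (u \cdot B) \tr C + (u \tr B) \tr C \;=\; \mathrm{a}_{\tr}(u,B,C) + (u \tr B) \tr C \;=\; u \tr (B \tr C),
\end{equation*}
using exactly the second defining relation of the lifted product. This two-line verification is the heart of the matter.

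The gap is in the extension to general $A$, and both of your proposed routes have a problem. Your preferred route (ii) --- bootstrapping over $\ast$-factorisations of $A$ using associativity of $\ast$ --- is circular in the standard development: in \cite{EFLMK15}, following \cite{OG2008}, the product is \emph{defined} by $A \ast B = \sum A_{(1)} \cdot (A_{(2)} \tr B)$ and its associativity is \emph{deduced from} identity \eqref{product}, not the other way around; you flag this danger yourself but do not resolve it, and an independent proof of associativity is not supplied. The non-circular route is your route (i), induction on the concatenation degree of $A$ with $A = u \cdot A'$, but carrying it out requires an auxiliary compatibility you never mention: one needs to know how the lifted $\tr$ interacts with the coproduct of $U(\g)$ --- concretely, that $u \tr (\,\cdot\,)$ is a coderivation, so that $\Delta(u \tr A) = \sum (u \tr A_{(1)}) \otimes A_{(2)} + \sum A_{(1)} \otimes (u \tr A_{(2)})$ --- in order to expand $(u \cdot A') \ast B$ and match the result against $u \cdot (A' \ast B) + u \tr (A' \ast B) - (u \tr A') \ast B$ before applying $\tr C$ and the inductive hypothesis. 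Without that lemma the ``bookkeeping'' you anticipate cannot actually be completed, so as written the proposal is a plan whose decisive inductive step is missing.
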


\medskip

From the point of view of Lie group integration we are mainly interested in comparing the two exponentials, $\exp^\centerdot$ and $\exp^*$, both defined in the appropriate completion of $U(\g)$. See \cite{EFM2018} for details. 

\begin{prop}\label{two exp}\cite{EFLMK15}
Consider the post-Lie algebra $(\mathcal{X}(G),-T(\cdot,\cdot),\triangleright)$ associated to a flat, constant torsion connection $\triangleright$, as per Example~\ref{exm:connection}. Then
\begin{enumerate}
\item The exponential $\exp^*(tf)$ is associated with the pullback of functions along integral curves, i.e., let
\[
	\dot{y}(t) = f(y(t)),\quad y(0)=y_0,
\]
then $\exp^*(tf):g\big(y_0\big)\mapsto g\big(y(t)\big)$ for any $g\in C^{\infty}(M)$.
\smallskip
\item The exponential $\exp^\centerdot(tf)$ gives the pullback of functions along geodesics of the product $\triangleright$, i.e., let $z(t)$ solve the geodesic equation 
\[
	\dot{z}(t) \triangleright \dot{z}(t) = 0,\quad z(0)=z_0,\quad \dot{z}(0) = f(z_0),
\]
then $\exp^\centerdot(tf):g\big(z_0\big)\mapsto g\big(z(t)\big)$ for any $g\in C^{\infty}(M)$.
\end{enumerate}
\end{prop}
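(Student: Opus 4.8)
My plan for both parts is the same. Each statement asserts the equality of two smooth functions of $t$: the pullback $g\big(\gamma(t)\big)$ of $g$ along the relevant curve $\gamma$ through the base point, and the function $p \mapsto \big[\exp^{\bullet}(tf)\triangleright g\big](p)$ obtained by acting with the group-like element $\exp^{\bullet}(tf)$ on $g$ via the lifted product $\triangleright$. Here $\triangleright$ acts on $C^{\infty}(M)$ through $X \triangleright g := \nabla_X g = X[g]$ for a vector field $X$, extended to all of $U(\g)$ by the module relations. I would establish the equality by comparing Taylor coefficients at $t=0$. Since the $n$-th coefficient of $\exp^{\bullet}(tf)$ is $f^{\bullet n}/n!$, this reduces each claim to an algebraic identity for $f^{\bullet n}\triangleright g$, which I then match against the $n$-th time derivative of $g\big(\gamma(t)\big)$.

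For part (1) I would first observe that, by the torsion formula \eqref{torsion} and the definition \eqref{postLie3}, the bracket $\llbracket \cdot,\cdot \rrbracket$ coincides with the Jacobi bracket, so $\bar{\g}$ is the ordinary Lie algebra of vector fields and $\big(U(\g),\ast\big)\cong U(\bar{\g})$ acts on functions as differential operators. The key input is Proposition~\ref{prop:product}, $(A \ast B)\triangleright g = A \triangleright (B \triangleright g)$, which says that on functions $\ast$ is realised by composition of derivations. Iterating yields $f^{\ast n}\triangleright g = f[f[\cdots f[g]]]$, the $n$-fold directional derivative. On the geometric side, differentiating $g\big(y(t)\big)$ along the integral curve of $f$ gives $\tfrac{d^n}{dt^n} g\big(y(t)\big)\big|_{t=0} = \big(f^n[g]\big)(y_0)$. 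Matching the two Taylor expansions then identifies $\exp^{\ast}(tf)\triangleright g$ with the pullback $g\big(y(t)\big)$ along the flow of $f$.

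For part (2) the product $\centerdot$ is the enveloping-algebra product of $\big(\mathcal{X}(G),-T\big)$. Using the module relations for the lifted product---the Leibniz rule $u\triangleright(A\centerdot B) = (u\triangleright A)\centerdot B + A\centerdot(u\triangleright B)$ together with $(u\centerdot A)\triangleright B = {\rm a}_{\triangleright}(u,A,B)$---I would prove by induction on $n$ the identity
\[
	f^{\centerdot n}\triangleright g = \big(\nabla^{n} g\big)(f,\dots,f),
\]
the $n$-th iterated covariant derivative of $g$ with $f$ inserted in every slot; the cases $n=1,2$ recover $\nabla_f g$ and the Hessian $\nabla_f(\nabla_f g) - \nabla_{\nabla_f f}\,g$. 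On the geometric side I would differentiate $g\big(z(t)\big)$ repeatedly along the geodesic, using $\nabla_{\dot z}\dot z = 0$ to eliminate the acceleration terms, so that $\tfrac{d^n}{dt^n} g\big(z(t)\big)\big|_{t=0} = \big(\nabla^{n} g\big)(f,\dots,f)(z_0)$ with $\dot z(0) = f(z_0)$. Matching coefficients then identifies $\exp^{\centerdot}(tf)\triangleright g$ with the pullback $g\big(z(t)\big)$ along the geodesic.

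The hard part will be the geodesic case. The delicate algebraic point is the inductive identity for $f^{\centerdot n}\triangleright g$: unwinding the associator relation produces a term $f\triangleright f^{\centerdot(n-1)}$ that has to be expanded through the Leibniz rule, and I must check that the resulting sum reassembles into the fully symmetric covariant derivative rather than some unsymmetrised iterate. On the geometric side the parallel difficulty is verifying that each higher time derivative of $g\big(z(t)\big)$ reduces, via repeated use of the geodesic equation and the definition of $\nabla^{n}$, to exactly this symmetric expression, with no leftover curvature or torsion terms; here flatness and $\nabla T = 0$ from Example~\ref{exm:connection} are precisely what make the two bookkeeping schemes coincide. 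Finally I would note that the comparison of Taylor series is a priori formal, so for a genuine equality of functions one should either work in the analytic category or read the statements at the level of $\infty$-jets, which is all that the Lie-group-integration applications require.
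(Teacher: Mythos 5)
The paper does not prove Proposition~\ref{two exp}; it is quoted verbatim from \cite{EFLMK15}, so there is no in-paper argument to compare against. Your proposal is nevertheless sound and is essentially the standard argument from the cited reference: compare Taylor coefficients at $t=0$, using $(A\ast B)\triangleright C=A\triangleright(B\triangleright C)$ to identify $f^{\ast n}\triangleright g$ with the $n$-fold Lie derivative $f[f[\cdots f[g]]]$ for part (1), and the module relations for the lifted product to identify $f^{\centerdot n}\triangleright g$ with the diagonal evaluation $(\nabla^n g)(f,\dots,f)$ of the iterated covariant derivative for part (2), which matches $\tfrac{d^n}{dt^n}g(z(t))\big|_{t=0}$ along a geodesic since $\nabla_{\dot z}\dot z=0$ propagates $\dot z(0)=f(z_0)$. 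Two points are worth making explicit rather than leaving implicit. First, Proposition~\ref{prop:product} as stated concerns $A,B,C\in U(\g)$; to apply it with $C=g\in C^{\infty}(M)$ you need the standard extension of the $\triangleright$-action of $U(\g)$ to functions (the D-algebra structure of \cite{LMK13}), under which $X\triangleright g=X[g]$ and the identity \eqref{product} persists. Second, your worry about symmetrisation in the inductive step for $f^{\centerdot n}\triangleright g$ evaporates on the diagonal: unwinding $(f\centerdot f^{\centerdot(n-1)})\triangleright g={\rm a}_{\triangleright}(f,f^{\centerdot(n-1)},g)$ with the Leibniz rule produces exactly the correction terms $-(\nabla^{n-1}g)(f,\dots,\nabla_f f,\dots,f)$ appearing in the definition of $\nabla^n g$, and since all slots carry the same argument $f$ no residual torsion or curvature terms arise; the post-Lie axioms (flatness and $\nabla T=0$) are only needed to guarantee that the lifted product is well defined on $U(\g)$ in the first place. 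Your closing caveat that the identification is one of $\infty$-jets (or requires analyticity for a genuine equality of functions) is correct and consistent with how the result is used in the paper.
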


The two exponentials are related by the post-Lie Magnus expansion:

\begin{lemma}[Post-Lie Magnus expansion, \cite{EFLMK15,EFMMK17}] \label{lem:postLieMag}
The exponentials $\exp^\centerdot$ and $\exp^*$ are related by the mapping $\theta(t):\g \to \g$,
\begin{equation}
\label{exprelation1}
	\exp^*(tf) = \exp^\centerdot(\theta(f)(t)),
\end{equation}
where $\theta(f)(t)$ solves the differential equation
\begin{equation}
\label{plm:eqn}
	\dot\theta(f)(t) 
	= \mathrm{dexp}^{\centerdot -1}_{\theta(f)} \big(\exp^\centerdot\big(\theta(f)\big)\tr f \big)(t), 
	\quad \theta(f)(0) = 0.
\end{equation}
For sufficiently small $t>0$, the map $\theta(f)(t)$ is invertible, and its inverse $\chi(f)(t)$ solves
\begin{equation}\label{plminv:eqn}
	\dot\chi(f)(t) = \mathrm{dexp}^{*-1}_{-\chi(f)} \big(\exp^*\big(-\chi(f)\big)\tr f \big)(t),
	\quad \chi(f)(0) = 0,
\end{equation} 
such that 
\begin{equation}
\label{exprelation1a}
	\exp^*(\chi(f)(t)) = \exp^\centerdot(tf).
\end{equation}
\end{lemma}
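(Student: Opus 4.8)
\emph{Plan of proof.} The plan is to prove \eqref{exprelation1} by showing that the two sides solve the same first-order initial value problem in the completed enveloping algebra and then invoking uniqueness; the second half will follow by inverting this relation and re-running the argument through the adjoint post-Lie structure of Proposition~\ref{prop:adjoint}.

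First I would record the two elementary differential equations. The $*$-exponential satisfies $\frac{d}{dt}\exp^*(tf) = \exp^*(tf) * f$ with $\exp^*(0)=1$, since $f$ commutes with itself for the product $*$. Writing $G(t):=\exp^\centerdot(\theta(f)(t))$ and applying the derivative-of-exponential formula for the associative product $\centerdot$ in the complete filtered algebra, one has $\dot G = G \centerdot \mathrm{dexp}^\centerdot_{\theta}(\dot\theta)$, where the trivialization convention is to be fixed by the matching below and $\mathrm{dexp}^\centerdot_{\theta}$ is invertible because $\theta(f)(t)$ lies in the augmentation ideal (indeed $\theta(f)(t)=tf+O(t^2)$). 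Since $\theta(0)=0$ gives $G(0)=1$, the initial data agree, and it remains only to match the right-hand sides.

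The decisive step is to re-express $G * f$ through the product $\centerdot$ and the action $\tr$. Here I would invoke the explicit relation between $*$ and $\centerdot$ from \cite{EFLMK15,EFM2018}, which is compatible with Proposition~\ref{prop:product}; for the group-like element $G=\exp^\centerdot(\theta(f)(t))$ it collapses to $G * f = G \centerdot (G \tr f)$. Substituting this together with $\dot G = G\centerdot\mathrm{dexp}^\centerdot_\theta(\dot\theta)$ into $\dot G = G * f$ and cancelling the invertible factor $G$ reduces the matching condition to $\mathrm{dexp}^\centerdot_\theta(\dot\theta)=\exp^\centerdot(\theta)\tr f$, which is exactly \eqref{plm:eqn} after applying $\mathrm{dexp}^{\centerdot-1}_\theta$. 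Uniqueness of solutions of the resulting initial value problem then yields \eqref{exprelation1}. I expect the main obstacle to be precisely this bookkeeping: fixing the left/right trivialization of $\mathrm{dexp}^\centerdot$ so that the action term $\exp^\centerdot(\theta)\tr f$ lands on the correct side of $G$, in agreement with the orientation of the $*$-to-$\centerdot$ identity and with \eqref{plm:eqn}.

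For the second statement, since $t\mapsto\theta(f)(t)$ equals $tf$ to leading order it is invertible for small $t>0$ (equivalently as a formal series), and $\chi(f)(t):=\log^*\!\big(\exp^\centerdot(tf)\big)$ is the corresponding inverse, so that \eqref{exprelation1a} holds by construction. To obtain \eqref{plminv:eqn} I would repeat the differentiation argument for $\exp^*(\chi(f)(t))=\exp^\centerdot(tf)$ with the roles of $*$ and $\centerdot$ interchanged: differentiate the left side with $\mathrm{dexp}^*_\chi$, differentiate the right side to get $f\centerdot\exp^\centerdot(tf)$, and convert the $\centerdot$-product back into $*$ and $\tr$ using the adjoint post-Lie structure $(\g,-[\cdot,\cdot],\btr)$ of Proposition~\ref{prop:adjoint}. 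The antipode appearing when one inverts the group-like element accounts for the occurrence of $-\chi$ and $\exp^*(-\chi)$ in \eqref{plminv:eqn}. The same trivialization care as above is the only delicate point; the remainder is a formal manipulation in the completed algebra.
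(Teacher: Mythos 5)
The paper does not prove this lemma: it is imported verbatim from \cite{EFLMK15,EFMMK17}, so there is no internal proof to compare against. Your proposal is, in substance, the standard argument from those references, and it is correct: the crux is exactly the identity $G * f = G \centerdot (G \tr f)$ for $\Delta$-group-like $G$, which follows from the definition of the Grossman--Larson-type product $A*B=\sum A_{(1)}\centerdot(A_{(2)}\tr f)$ alluded to (but not written out) in Section~\ref{sect:prepost}; after cancelling the invertible group-like factor, uniqueness for the resulting formal initial value problem gives \eqref{exprelation1}, and the inverse statement follows as you describe. Two small points. First, the trivialization issue you flag is real and is the only place where care is needed: with the paper's own convention $\mathrm{dexp}_\Omega=\frac{\exp(\mathrm{ad}_\Omega)-1}{\mathrm{ad}_\Omega}$ from \eqref{firstP}, the right-trivialized derivative is $\frac{d}{dt}\exp^\centerdot(\theta)=\exp^\centerdot(\theta)\centerdot\mathrm{dexp}^\centerdot_{-\theta}(\dot\theta)$, so the matching naturally produces a subscript $-\theta$ in \eqref{plm:eqn}, mirroring the $-\chi$ that does appear in \eqref{plminv:eqn}; the asymmetry in the paper's statement is a convention artefact, not an error in your argument. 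Second, for \eqref{plminv:eqn} you do not actually need the adjoint structure $(\g,-[\cdot,\cdot],\btr)$ of Proposition~\ref{prop:adjoint}: since $\tr$ is a left action of $(U(\g),*)$ by Proposition~\ref{prop:product}, one has directly $G\centerdot C=G*(G^{*-1}\tr C)$ for group-like $G$, and applying this with $G=\exp^*(\chi)$, $C=f$ yields \eqref{plminv:eqn} at once; your route via $\btr$ also works but is a detour.
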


\begin{rmk} {\rm{
We remark that identity \eqref{exprelation1a} encodes backward error analysis for the forward exponential Euler method \cite{Voss1,LMK13}. Indeed, the Lie--Euler integration scheme is the numerical method that approximates solutions of the initial value problem \eqref{eq:mkform} below by following Lie group exponentials, i.e.,
\[
	y_1 = \exp\big(h\lambda_*f(y_0)\big)y_0.
\]
Backward error analysis studies properties of the approximate flow by finding modified vector fields $\tilde{f}$ for which the exact flow $\tilde{y}(t)$ coincides with $y_n$ at the discretisation times $t_n$. This problem is exactly that of finding $\chi(f)(t)$ for which $\exp^\centerdot(tf) = \exp^*(\chi(f)(t))$, for the post-Lie algebra structure on $C^{\infty}(M,\g)$ defined in \cite[Prop.~2.10]{LMK13}.

We also remark that the inverse of the post-Lie Magnus expansion has appeared in the study of isospectral flows \cite{EFLMMK15}.
}}
\end{rmk}

We introduce now the notion of Pre-Lie Magnus expansion. Observe that if the Lie algebra $\mathfrak g$ in Definition \ref{def:postLie} is trivial, i.e., if $[\cdot,\cdot]=0$, then the post-Lie algebra $(\mathfrak g, [\cdot, \cdot], \triangleright )$ reduces to $(\mathfrak g, \triangleright )$, which satisfies the definition of a pre-Lie algebra. The latter being defined as:

\begin{mydef} \label{def:preLie} 
A left \emph{pre-Lie algebra} $(\mathfrak{p}, \cur )$ consists of a vector space $\mathfrak p$ and a binary product $\cur  \colon {\mathfrak p} \otimes {\mathfrak p} \rightarrow \mathfrak p$ such that, for all elements $x,y,z \in \mathfrak p$ 
\begin{align}
\label{preLie}
	 {\rm{a}}_{\cur }(x,y,z) = {\rm{a}}_{\cur}(y,x,z).
\end{align}
\end{mydef}

\noindent Note that \eqref{preLie} in explicit form already appeared in \eqref{plid} above. The notion of right pre-Lie algebra is analogously defined with $ {\rm{a}}_{\curvearrowleft  }(x,y,z) = {\rm{a}}_{\curvearrowleft}(x,z,y)$ replacing \eqref{preLie}. 

\begin{example}\label{ex:preLie}{\rm{The natural geometric example of a pre-Lie algebra is given in terms of a differentiable manifold $M$ with a flat and torsion-free connection. Vanishing torsion and curvature are expressed in terms of the corresponding covariant derivation $\nabla$ on the space $\chi(M)$ of vector fields on $M$ satisfying the two equalities $\nabla_fg-\nabla_gf=[f,g]$ and $\nabla_{[f,g]}=[\nabla_f,\nabla_g]$, respectively. From this we deduce that $f \triangleright g:=\nabla_fg$ defines a left pre-lie algebra structure on $\chi(M)$.  Let $M=\mathbb{R}^n$ with its standard flat connection. For two vector fields $f(x)=\sum_{i=1}^nf^i(x)\frac{\partial}{\partial x^i}$ and $g(x)=\sum_{i=1}^ng^i(x)\frac{\partial}{\partial x^i}$ it follows that
\begin{equation}
\label{preLieVect1}
	(f \cur g) (x)=\sum_{i=1}^n\Big(\sum_{j=1}^n 
	f^j(x)\frac{\partial}{\partial x^j}g^i(x)\Big)\frac{\partial}{\partial x^i}.
\end{equation}
}}\end{example}

Let us look at Lemma \ref{lem:postLieMag} from the pre-Lie algebra point of view. Guin and Oudom \cite{OG2008} lifted the pre-Lie product of a pre-Lie algebra $(\mathfrak{p},  \cur)$ to the symmetric algebra $S(\mathfrak{p})$, and showed that this permits to introduce a noncommutative associative product on $S(\mathfrak{p})$, widely referred to as Grossman--Larson product, such that the resulting (Hopf) algebra $(S(\mathfrak{p}),\ast,\Delta)$ is isomorphic (as a Hopf algebra) to the enveloping algebra $U(\bar{\g})$. Note that the coproduct $\Delta$ is the usual unshuffle comultiplication defined on $S(\mathfrak{p})$. Recall that $\overline{\mathfrak g}$ is the Lie algebra with commutator bracket defined in terms of the pre-Lie product, see Definition \ref{def:preLie}. Chapoton and Patras \cite{CP13} proved an analog of Lemma \ref{lem:postLieMag}  in the pre-Lie case, i.e., the exponential $\exp^\centerdot$ and the Grossman--Larson exponential $\exp^*$, defined in an appropriate completion of $S(\mathfrak{p})$, are related by the pre-Lie Magnus expansion:
\begin{equation}
\label{exprelation2}
	\exp^*(\Omega_{\cur }(tx)) = \exp^\centerdot(tx),
\end{equation}
where $x \in {\g}$ and $t$ is a formal parameter. The pre-Lie Magnus expansion is defined in terms of the pre-Lie product $\cur$ 
\begin{align}
	\Omega_\cur (x) &:= \frac{\ell_{\Omega(x)\cur}}{\exp(\ell_{\Omega(x)\cur})-1}(x) 
	= \sum_{n \ge 0} \frac{B_n}{n!} \ell^{(n)}_{\Omega(x)\cur }(x).\label{pre-Lie-Magnus}
\end{align}
Recall that the compositional inverse $W(x):=\Omega^{-1}_\cur (x)$ is given by
\begin{equation}
\label{inverse-pre-Lie-Magnus}
	W(x) := \sum_{n \ge 0} \frac{1}{(n+1)!}  \ell^{(n)}_{x\cur }(x),
\end{equation}
such that
\begin{equation}
\label{exprelation3}
	\exp^*(tx) = \exp^\centerdot(W(tx)).
\end{equation} 
Returning to Example \ref{ex:preLie}, where the particular pre-Lie algebra \eqref{preLieVect1} is defined, we consider the initial value problem $\dot{y}=f(y)$, $y(0)=y_0$, where $f$ is a vector field on $\mathbb{R}^n$. The exact solution is given by the Grossman--Larson exponential, $y(t)=\exp^*(tf)$. Identity \eqref{exprelation2} defines the backward error analysis of the explicit forward Euler method on $\mathbb{R}^n$, i.e., $\exp^*(\Omega_{\cur }(hf))(y) = y + f(hy)$, where $h$ denotes the step size \cite{Chapoton02,CHV10,Murua06}.  On the other hand, from \eqref{exprelation3} we deduce that the Taylor expansion of the exact solution can be expressed as a series in iterated pre-Lie products 
$$
	\exp^*(tf)(y) 
	= y + W(tf)(y) 
	= y + tf(y) 
		+ \frac{t^2}{2} (f \cur  f)(y) 
			+  \frac{t^3}{6} (f \cur (f \cur f))(y) + \cdots.
$$


\section{Lie group integration and the Post-Lie Magnus expansion}
\label{sect:LGIMag}

Let $\mathcal{X}(M)$ be the space of vector fields on a manifold $M$. Lie group integration \cite{CHO2014,Iserles00} concerns initial value problems of the form 
\begin{equation}
\label{eq:mkform}
	\dot{y} = \big( \lambda_* f(y,t)\big)(y),\quad y(0)=p,
\end{equation}
where $f \colon M\times\mathbb{R_+} \rightarrow \mathfrak{g}$, and $\lambda_* \colon \mathfrak{g} \rightarrow \mathcal{X}(M)$ is the infinitesimal action of the Lie algebra $\mathfrak{g}$ arising from a transitive group action $\Lambda \colon G \times M \rightarrow M$ as
\[
	\lambda_*(u)(p) = \left.\frac{d}{dt}\right|_{t=0} \Lambda(\exp(tu),p).
\]

Although Runge--Kutta--Munthe-Kaas (RKMK) methods can be applied to non-autonomous problems, the framework for their analysis based on enveloping algebras of post-Lie algebras \cite{EFLMK15} has only been derived in the autonomous setting. It is a standard result for equations on $\mathbb{R}^n$ that non-autonomous problems may be autonomised by the addition of an extra variable corresponding to time. Whilst this results in an equivalent method (provided the RK coefficients in the Butcher tableau obey $c_i = \sum_j a_{ij}$, as is usually assumed), this can be used to simplify convergence and stability analysis.
Autonomisations of RKMK methods on $\mathbb{R}^n$ have been considered in \cite{Min07}, to our knowledge a similar procedure has yet to be carried out on more general homogeneous spaces.

We now perform such an autonomisation for the Lie group integration problem, for which the most convenient method is to augment the Lie group $G$ to $G\times\mathrm{Aff}(1)$. We recall the definition of the group of affine transformations
$$
	\mathrm{Aff}(1):=\bigg\{\begin{pmatrix}
				     x & t \\
				     0 & 1
			   \end{pmatrix} \ |\ x,t\in\mathbb{R}, \; x\neq 0 \bigg\}.
$$
The associated Lie algebra, $\mathfrak{aff}(1)$, is generated by the two matrices 
$$
	e_0=\begin{pmatrix}
				     0 & 1 \\
				     0 & 0
			   \end{pmatrix},\ 
e_{-1}=\begin{pmatrix}
				     1 & 0 \\
				     0 & 0
			   \end{pmatrix}.
$$
The first element corresponds to translation, indeed we have
$$
	\exp(t e_0) = \begin{pmatrix}
	1 & t \\
	0 & 1
	\end{pmatrix}.
$$
From this last result, the following is immediate regarding the non-autonomous linear initial value problem on $GL(n,\mathbb{R})$
\begin{equation}
\label{matrix:ode1}
	\dot{Y}(t) = A(t) Y(t),\quad Y(0)=y_0, \quad A(t)\in\mathfrak{gl}(n,\mathbb{R}).
\end{equation}
 
\begin{lemma}\label{lem:autonomization}
Problem \eqref{matrix:ode1} is equivalent to the following Lie group integration representation on $G$
\[
	\dot{\mathcal{Y}} = \mathcal{A}(\mathcal{Y})\cdot \mathcal Y,
	\quad \mathcal{Y} (0)=\begin{pmatrix}
				     y_0 & 0 \\
					0 & I
			   \end{pmatrix},
\]
where $G:=GL(n) \times \mathrm{Aff}(1)$, $\mathfrak{g}:=\mathfrak{gl}(n)\times\mathfrak{aff}(1)$, and $\mathcal Y(t) \in G$, $\mathcal{A} \colon G\rightarrow\mathfrak{g}$ take the form
\begin{equation}\label{Abar}
	\quad \mathcal Y=\begin{pmatrix}
				Y & 0 & 0 \\
				0 & x & t \\
				0 & 0 & 1
		      \end{pmatrix},\quad
	\mathcal{A}(\mathcal Y)=\begin{pmatrix}
				A(t) & 0 & 0 \\
				    0 & 0 & 1 \\
				    0 & 0 & 0
			     \end{pmatrix}.
\end{equation}
\end{lemma}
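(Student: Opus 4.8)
The plan is to exploit the block structure of $\mathcal{A}$ and $\mathcal{Y}$ in \eqref{Abar}, which decouples the augmented system $\dot{\mathcal{Y}} = \mathcal{A}(\mathcal{Y})\cdot\mathcal{Y}$ into two non-interacting pieces: an upper $\mathfrak{gl}(n)$-block governing $Y$, and a lower $\mathfrak{aff}(1)$-block governing the $2\times 2$ matrix $\begin{pmatrix} x & t \\ 0 & 1 \end{pmatrix}$. Since the product $\mathcal{A}(\mathcal{Y})\cdot\mathcal{Y}$ is block-diagonal (the off-diagonal blocks vanish identically), the two subsystems never couple, and the whole content of the lemma is that the $\mathfrak{aff}(1)$-block manufactures exactly the time variable that must be fed back into the top block.

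First I would solve the lower block explicitly. Its contribution to $\mathcal{A}(\mathcal{Y})$ is the constant generator $e_0 = \begin{pmatrix} 0 & 1 \\ 0 & 0 \end{pmatrix}$, so the block obeys the autonomous linear equation $\frac{d}{ds}\begin{pmatrix} x & t \\ 0 & 1\end{pmatrix} = e_0\begin{pmatrix} x & t \\ 0 & 1\end{pmatrix}$ with initial value the $2\times 2$ identity. Using $\exp(s e_0) = \begin{pmatrix} 1 & s \\ 0 & 1\end{pmatrix}$ recorded above, the unique solution is $x(s) = 1$ and $t(s) = s$; that is, the relevant entry of $\mathcal{Y}$ equals the integration time exactly.

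Next I would substitute this back into the upper block. By construction $\mathcal{A}(\mathcal{Y})$ depends on the state $\mathcal{Y}$ only through that time-entry $t$, so once $t(s)=s$ is established the top-left block of $\mathcal{A}(\mathcal{Y})\cdot\mathcal{Y}$ reads $A(s)\,Y(s)$, and the $Y$-equation becomes $\dot{Y}(s) = A(s)Y(s)$ with $Y(0)=y_0$ — which is precisely \eqref{matrix:ode1}. For the converse direction, given a solution $Y$ of \eqref{matrix:ode1} one assembles $\mathcal{Y}$ by adjoining the block $\begin{pmatrix} 1 & s \\ 0 & 1 \end{pmatrix}$ and verifies it satisfies the augmented equation, yielding equivalence in both directions.

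Finally I would note why the augmented problem is a Lie group integration problem in the sense of \eqref{eq:mkform}: here $G = GL(n)\times\mathrm{Aff}(1)$ acts on itself by left translation, whose infinitesimal action sends $u\in\mathfrak{g}$ to the vector field $\mathcal{Y}\mapsto u\cdot\mathcal{Y}$, and $\mathcal{A}\colon G\to\mathfrak{g}$ is the genuine state-dependent, time-independent map extracting $t$ from the affine block. I do not expect a serious obstacle; the only points requiring care are the bookkeeping between the symbol $t$ used both for the matrix entry and for the evolution parameter, and confirming that the block decoupling is exact, so that reading $t$ off the state legitimately replaces the external time dependence — this replacement being the whole substance of the \emph{autonomisation}.
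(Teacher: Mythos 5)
Your verification is correct and is exactly the computation the paper has in mind: the paper gives no formal proof, declaring the lemma ``immediate'' from the preceding observation that $\exp(te_0)=\left(\begin{smallmatrix}1 & t\\ 0 & 1\end{smallmatrix}\right)$, which is precisely your decoupled lower-block argument producing $t(s)=s$, followed by substitution into the top block. The only cosmetic discrepancy is terminological: you call the action ``left translation'' while the paper phrases it via right-invariant vector fields, but both describe the same vector field $\mathcal{Y}\mapsto u\cdot\mathcal{Y}$, so there is no gap.
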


Note that this corresponds to the special case where the manifold $M=G$ and the group action on $G$ is simply defined in terms of the product in $G$. By abuse of notation, we identify the map $\mathcal{A} \colon G\rightarrow\mathfrak{g}$ with the vector field $\lambda_*(\mathcal{A})\in\mathcal{X}(G)$, the infinitesimal action of $\mathfrak{g}$ on $G$ being that associated to the right-multiplication action of $G$ on itself. In the autonomous case, this corresponds to the identification of $A\in\mathfrak{gl}(n)$ with the right-invariant vector field with tangent vector $A$ at the origin. In contrast, the vector fields $\mathcal{A}$ are only quasi-right-invariant in the following sense:

\begin{mydef}[Quasi-right-invariant fields]
\label{def:quasiright}
	We define $\mathfrak{t} \subset\mathcal{X}(G)$ to be the space of vector fields invariant by right-translations of the Lie subgroup $GL(n)\subset GL(n)\times\mathrm{Aff}(1)$ obtained by restricting to the identity element in $\mathrm{Aff}(1)$.
\end{mydef}

To illustrate this definition, suppose that $e_{-1},e_0,e_1,\ldots,e_d$ is a basis for the space of right-invariant vector fields $\mathfrak{g}$, where $e_{-1},e_0$ are the products of the basis vectors for $\mathfrak{aff}(1)$ with the zero vectors in $\mathfrak{gl}(n)$. The elements of $\mathfrak{t}$ comprise vector fields of the form
\[
	\mathcal V = v_0 e_0 + \sum_{i=1}^d v_i(t) e_i,
\]
or alternatively those that can be expressed through the infinitesimal action $\lambda_*$ as
\[
\tilde{\mathcal{A}}(A(t),a) = 
	\lambda_*\begin{pmatrix}
	A(t) & 0 & 0 \\
	0 & 0 & a \\
	0 & 0 & 0
	\end{pmatrix}
\]
for some constant $v_0=a$ and matrix $A(t) \in \mathfrak{gl}(n)$. For the sake of transparency we will omit the arguments of vector fields $\tilde{\mathcal{A}}$ whenever we can do so without causing confusion.

\begin{lemma}
\label{lem:Liesubalgebra}
The space $\mathfrak{t}$ of quasi-right-invariant vector fields forms a Lie subalgebra in $\mathcal{X}(G)$
\end{lemma}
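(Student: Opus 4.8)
The plan is to work directly with the explicit parametrisation of $\mathfrak{t}$ recorded just after Definition~\ref{def:quasiright}: every element has the form $\mathcal{V}=v_0e_0+\sum_{i=1}^d v_i(t)e_i$ in the right-invariant frame $e_{-1},e_0,e_1,\dots,e_d$, where $v_0$ is a \emph{constant}, the $v_i$ are functions of $t$ alone, and --- crucially --- no $e_{-1}$ term occurs. Given two such fields $\mathcal{V}$ and $\mathcal{W}=w_0e_0+\sum_{j=1}^d w_j(t)e_j$, I would compute their Jacobi bracket by expanding over the frame through the Leibniz rule $[fX,gY]=fg\,[X,Y]+f(Xg)Y-g(Yf)X$ and summing over all index pairs. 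It then suffices to check that the result again has a constant $e_0$-coefficient, no $e_{-1}$-coefficient, and $e_i$-coefficients (for $i\ge 1$) depending only on $t$.

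The computation rests on three structural facts, which I would verify first. First, because $\mathfrak{g}=\mathfrak{gl}(n)\times\mathfrak{aff}(1)$ is a product, the two factors commute: $[e_0,e_i]=0$ for $i\ge 1$, while $[e_i,e_j]\in\operatorname{span}(e_1,\dots,e_d)$ for $i,j\ge 1$ since $\mathfrak{gl}(n)$ is a subalgebra. Hence the structure-constant contribution $\sum_{i,j\ge1}v_i(t)w_j(t)[e_i,e_j]$ feeds only the $e_1,\dots,e_d$ directions, with $t$-dependent coefficients, and produces neither an $e_0$- nor an $e_{-1}$-component. Second, $e_0$ acts as $\partial_t$ on functions of the $\mathrm{Aff}(1)$-coordinates, whereas each $e_i$ ($i\ge 1$), being tangent to the $GL(n)$-fibres, annihilates functions of $t$; thus the derivative terms collapse to $v_0\sum_{b\ge1}w_b'(t)e_b-w_0\sum_{a\ge1}v_a'(t)e_a$, again supported on $e_1,\dots,e_d$ with $t$-dependent coefficients. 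Third, since $v_0$ and $w_0$ are constant, $e_a(v_0)=e_a(w_0)=0$, so differentiating the translation coefficients generates no $e_0$- or $e_{-1}$-term.

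Assembling these gives $[\mathcal{V},\mathcal{W}]=\sum_{i,j\ge1}v_i(t)w_j(t)[e_i,e_j]+v_0\sum_{b\ge1}w_b'(t)e_b-w_0\sum_{a\ge1}v_a'(t)e_a$, whose $e_{-1}$- and $e_0$-coefficients both vanish (the latter being the admissible constant $0$) and whose remaining coefficients are functions of $t$ alone; hence $[\mathcal{V},\mathcal{W}]\in\mathfrak{t}$. Antisymmetry and the Jacobi identity are inherited from $\mathcal{X}(G)$, so $\mathfrak{t}$ is a Lie subalgebra. The one point demanding care --- and the only place the special shape of $\mathfrak{t}$ could fail to survive bracketing --- is the vanishing of the $e_{-1}$- and $e_0$-components; this is precisely what the product structure of $\mathfrak{g}$ (making $e_0$ commute with the $\mathfrak{gl}(n)$-directions and keeping $[\mathfrak{gl}(n),\mathfrak{gl}(n)]\subseteq\mathfrak{gl}(n)$), together with the constancy of the $e_0$-coefficients, guarantees. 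I would therefore foreground these two facts as the crux and treat the bracket expansion itself as routine.
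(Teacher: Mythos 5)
Your proposal is correct and follows essentially the same route as the paper: both compute the Jacobi bracket of two quasi-right-invariant fields explicitly in the right-invariant frame and observe that the result has vanishing $e_{-1}$- and $e_0$-components and $t$-dependent $\mathfrak{gl}(n)$-coefficients, namely $[H,K]+h\dot K-k\dot H$ in the paper's notation, hence lies again in $\mathfrak{t}$. Your separation of the structure-constant term from the derivative terms via the Leibniz rule is just a slightly more explicit bookkeeping of the same calculation the paper performs on test functions.
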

\begin{proof}
To prove this we must compute the Jacobi bracket of vector fields
\[
\tilde{\mathcal{H}} = \lambda_*\begin{pmatrix}
H(t) & 0 & 0 \\
0 & 0 & h \\
0 & 0 & 0
\end{pmatrix} = h e_0 + \sum_i h_i(t) e_i
\]
\[
\tilde{\mathcal{K}} = \lambda_*\begin{pmatrix}
K(t) & 0 & 0 \\
0 & 0 & k \\
0 & 0 & 0
\end{pmatrix} = k e_0 + \sum_i k_i(t) e_i
\]
to which purpose we use the identification of vector fields with derivations on $C^{\infty}(G)$ functions. Indeed, employing summation convention on positive indices and using $x_i$ for derivatives of $\varphi$ in the direction of $e_i$ and $t$ for the direction of $e_0$, we have
\begin{align*}
	\llbracket \tilde{\mathcal H},\tilde{\mathcal K}\rrbracket (\varphi)
	&= \tilde{\mathcal H}(\tilde{\mathcal K}(\varphi)) - \tilde{\mathcal K}(\tilde{\mathcal H}(\varphi)) \\
	&= h \frac{\partial k_j}{\partial t}\frac{\partial\varphi}{\partial x_j} 
	- k \frac{\partial h_j}{\partial t}\frac{\partial\varphi}{\partial x_j}
	+ h_i \frac{\partial k_j}{\partial x_i} \frac{\partial\varphi}{\partial x_j}
	- k_i \frac{\partial h_j}{\partial x_i} \frac{\partial\varphi}{\partial x_j}.
\end{align*}
In coordinate free notation this gives the following, from which the proof of the claim is immediate:
\begin{equation}\label{tjacobi}
	\llbracket \tilde{\mathcal H},\tilde{\mathcal K}\rrbracket  = \lambda_*\begin{pmatrix}
	[H(t),K(t)] + h\dot{K}(t) - k\dot{H}(t) & 0 & 0 \\
	0 & 0 & 0 \\
	0 & 0 & 0
	\end{pmatrix}.
\end{equation}
\end{proof}
Following \cite{Postnikov}, we recall that the right Cartan connection $\tr$ on $\mathcal{X}(G)$ is defined by setting  $\tilde{\mathcal H} \tr \tilde{\mathcal K} := 0$ for right-invariant $\tilde{\mathcal H}$ and $\tilde{\mathcal K}$. Indeed, this extends uniquely to a connection $\tr$ on $\mathcal{X}(G)$ by writing a general vector field in the from $f_i \tilde{\mathcal H_i}$ for some $f_i \in C^{\infty}(G)$ and $\tilde{\mathcal H_i}$ a basis for $\mathfrak{g}$ and enforcing the equality
\begin{equation}
\label{connection}
	(f_i \tilde{\mathcal H_i}) \tr (g_j \tilde{\mathcal K_j}) 
	= f_i\,dg_j(\tilde{\mathcal H_i}) \tilde{\mathcal K_j} 
		+ f_i g_j(\tilde{\mathcal H_i} \tr \tilde{\mathcal K_j}).
\end{equation} 
This restricts to a bilinear product on $\mathfrak{t}$, indeed we compute
\begin{equation}\label{tconn}
	\tilde{\mathcal H}\triangleright \tilde{\mathcal K} = \lambda_* 
				\begin{pmatrix}
				h\frac{\partial K}{\partial t}(t) & 0 & 0\\
				    0 & 0 & 0 \\
				    0 & 0 & 0
			     	\end{pmatrix}.
\end{equation}
We are now in a position to define the Lie bracket $[\tilde{\mathcal{H}},\tilde{\mathcal{K}}]_{\mathfrak{t}} = \llbracket \tilde{\mathcal{H}},\tilde{\mathcal{K}} \rrbracket - \tilde{\mathcal{H}} \tr \tilde{\mathcal{K}} + \tilde{\mathcal{K}} \tr \tilde{\mathcal{H}}$ as per Example \ref{exm:connection}. From \eqref{tjacobi} and \eqref{tconn} we then find
\[
[\tilde{\mathcal{H}},\tilde{\mathcal{K}}]_{\mathfrak{t}} = 
\lambda_*\begin{pmatrix}
[H(t),K(t)]  & 0 & 0 \\
0 & 0 & 0 \\
0 & 0 & 0
\end{pmatrix}.
\]
Note that the scalar components $h,k$ of $\tilde{\mathcal H},\tilde{\mathcal K}$ have no impact on $[\tilde{\mathcal{H}},\tilde{\mathcal{K}}]_{\mathfrak{t}}$. 
\begin{lemma}\label{tPostLie}
The space $(\mathfrak{t},[\cdot,\cdot]_{\mathfrak{t}},\tr)$ is a post-Lie algebra, where $\tr$ is the right Cartan connection and $[\cdot,\cdot]_{\mathfrak{t}}$ is the negative of the associated torsion tensor. The two exponentials in the corresponding enveloping algebras are interpreted as follows: for any $\tilde{\mathcal{H}}$, $\exp^\centerdot(\tilde{\mathcal{H}} t).\tilde{\mathcal{K}}_0$ is the geodesic through the point $\tilde{\mathcal{K}}_0 \in G$ in the direction $\tilde{\mathcal{H}}(\tilde{\mathcal{K}}_0)$ and hence coincides with the matrix exponential. In contrast, $\exp^*(\tilde{\mathcal{H}} t).\tilde{\mathcal{K}}_0$ is the integral curve of $\tilde{\mathcal{H}}$ passing through $\tilde{\mathcal{K}}_0$, and hence solves the equation \eqref{matrix:ode}.
\end{lemma}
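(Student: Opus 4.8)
\emph{Proof plan.} The plan is to obtain the post-Lie structure on $\mathfrak{t}$ by restriction from a post-Lie structure on all of $\mathcal{X}(G)$, and then to read off the two exponentials from Proposition~\ref{two exp}. For the algebraic part I would invoke Example~\ref{exm:connection}: it suffices to show that the right Cartan connection $\tr$ is flat with parallel torsion, $R=0=\nabla T$, so that $(\mathcal{X}(G),-T(\cdot,\cdot),\tr)$ is a post-Lie algebra, and then to check that $\mathfrak{t}$ is closed under $\tr$ and under $-T$.

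Flatness and parallelism of the torsion are immediate from the defining property of the connection. Since $G$ is a Lie group it is parallelizable, and the right-invariant fields furnish a global frame on which $\tr$ vanishes by definition. Both $R$ and $\nabla T$ are tensorial, so they may be evaluated on this frame; there every $\tr$-product is zero and $\llbracket \tilde{\mathcal H},\tilde{\mathcal K}\rrbracket$ is again right-invariant, whence $R(\tilde{\mathcal H},\tilde{\mathcal K})\tilde{\mathcal L}=0$, while $-T$ coincides on the frame with the constant structure-constant Lie bracket, so its components in the parallel frame are constant and $\nabla T=0$. This is the classical statement that the Cartan--Schouten connections are flat with parallel torsion \cite{Postnikov}. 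With the ambient post-Lie algebra in hand I would restrict: equation~\eqref{tconn} shows $\mathfrak{t}$ is closed under $\tr$, and the explicit computation of $[\tilde{\mathcal H},\tilde{\mathcal K}]_{\mathfrak{t}}$ preceding the statement shows it lands in $\mathfrak{t}$ and equals $-T$ restricted to $\mathfrak{t}$. Because the post-Lie axioms \eqref{postLie1}--\eqref{postLie2} are identities valid for all elements of the ambient algebra, they hold on the closed subspace $\mathfrak{t}$, so $(\mathfrak{t},[\cdot,\cdot]_{\mathfrak{t}},\tr)$ is a post-Lie algebra.

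For the two exponentials I would appeal directly to Proposition~\ref{two exp}, whose hypotheses are met by the flat, constant-torsion connection $\tr$, noting that the exponentials in the enveloping algebra of $\mathfrak{t}$ are restrictions of those on $\mathcal{X}(G)$. By part~(1), $\exp^*(\tilde{\mathcal H}t).\tilde{\mathcal K}_0$ is the integral curve of $\tilde{\mathcal H}$ through $\tilde{\mathcal K}_0$, i.e.\ the solution of $\dot{\mathcal Y}=\mathcal A(\mathcal Y)\cdot\mathcal Y$; by Lemma~\ref{lem:autonomization} this is equivalent to \eqref{matrix:ode}, giving the claimed interpretation of $\exp^*$. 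By part~(2), $\exp^\centerdot(\tilde{\mathcal H}t).\tilde{\mathcal K}_0$ is the geodesic of $\tr$ emanating from $\tilde{\mathcal K}_0$ with initial velocity $\tilde{\mathcal H}(\tilde{\mathcal K}_0)$. Since the right-invariant fields are $\tr$-parallel, such a geodesic is the integral curve of the right-invariant field extending $\tilde{\mathcal H}(\tilde{\mathcal K}_0)$, namely $t\mapsto\exp(t\xi)\tilde{\mathcal K}_0$, where $\xi\in\mathfrak g$ is the value at $\tilde{\mathcal K}_0$ of the $\mathfrak g$-valued function defining $\tilde{\mathcal H}$; on the $GL(n)$ block this is precisely the matrix exponential.

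I expect the main obstacle to lie not in the flatness computation, which is standard, but in handling the fact that the fields in $\mathfrak{t}$ are only quasi-right-invariant: one must make sure this weaker invariance interferes neither with the restriction of the post-Lie axioms nor with the geodesic identification. For the axioms this is harmless, since closure under $\tr$ and $[\cdot,\cdot]_{\mathfrak{t}}$ is all that is needed and both are verified explicitly. For the geodesic it is likewise harmless, because the Cartan geodesic through $\tilde{\mathcal K}_0$ depends only on the initial tangent vector $\tilde{\mathcal H}(\tilde{\mathcal K}_0)$ and not on the behaviour of $\tilde{\mathcal H}$ elsewhere; the time-dependence of $A(t)$ is thereby ``frozen'' at the base point, which is exactly the discrepancy between the matrix exponential $\exp^\centerdot$ and the true flow $\exp^*$ that the Magnus expansion will later account for.
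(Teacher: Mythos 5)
Your proposal is correct, and it follows the same conceptual route the paper itself announces --- a specialisation of Example~\ref{exm:connection} --- but the execution differs in a way worth comparing. The paper does not pass through an ambient post-Lie structure on $\mathcal{X}(G)$: it verifies the two post-Lie identities directly on $\mathfrak{t}$, reducing each to a one-line computation on the top-left block of the matrix representation, namely $h\,\partial_t\big(k\,\partial_t J\big)-k\,\partial_t\big(h\,\partial_t J\big)=0$ for flatness and the Leibniz rule applied to $\partial_t[K,J]$ for the constant-torsion axiom. You instead establish $R=0=\nabla T$ on all of $\mathcal{X}(G)$ by tensoriality evaluated on the parallel right-invariant frame, and then restrict, using closure of $\mathfrak{t}$ under $\tr$ (visible from \eqref{tconn}) and under $[\cdot,\cdot]_{\mathfrak{t}}$ (visible from the displayed bracket computation preceding the lemma), together with the fact that universally quantified identities survive restriction to a closed subspace. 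Both arguments are sound; yours makes explicit the standard Cartan--Schouten fact that the paper only cites to \cite{Postnikov}, and it correctly isolates the one point where quasi-right-invariance (as opposed to genuine right-invariance) could cause trouble, namely closure of $\mathfrak{t}$ under the two operations. You also supply an actual justification for the exponential interpretations --- the geodesics of $\tr$ are translates of one-parameter subgroups because the right-invariant frame is $\tr$-parallel, giving the matrix exponential, while the integral-curve statement follows from Proposition~\ref{two exp} combined with Lemma~\ref{lem:autonomization} --- whereas the paper's written proof addresses only the post-Lie axioms and leaves the exponential claims implicit. Your closing observation, that the geodesic depends only on the frozen initial tangent vector while the integral curve sees the full time-dependence of $A(t)$, is exactly the discrepancy the subsequent Geometric Magnus expansion theorem quantifies.
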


\begin{proof}
This is essentially a specialisation of Example \ref{exm:connection}, as the Cartan connection $\tr$ defined above is flat and has constant torsion (see \cite{Postnikov}). In this instance we can give the calculations explicitly. 
As per Example \ref{exm:connection}, flatness of the connection is expressed by
\[
	\tilde{\mathcal{H}} \tr (\tilde{\mathcal{K}} \tr \tilde{\mathcal{J}}) - \tilde{\mathcal{K}} \tr (\tilde{\mathcal{H}} \tr \tilde{\mathcal{J}}) 
	- \llbracket \tilde{\mathcal{H}},\tilde{\mathcal{K}} \rrbracket \tr \tilde{\mathcal{J}} = 0.
\]
To see this, we note that we need only check the top left block in the matrix representation of elements of $\mathfrak{t}$, which is
\[
	h \frac{\partial }{\partial t} (k \frac{\partial {J}}{\partial t})
	- k \frac{\partial }{\partial t} (h \frac{\partial {J}}{\partial t}) - 0 = 0.
\]
The remaining post-Lie axiom to check is the constant torsion relation
\[
	\tilde{\mathcal{H}} \tr [\tilde{\mathcal{K}},\tilde{\mathcal{J}}] 
	- [\tilde{\mathcal{K}}, \tilde{\mathcal{H}} \tr \tilde{\mathcal{J}}] 
	- [\tilde{\mathcal{H}}\tr \tilde{\mathcal{K}}, \tilde{\mathcal{J}}] = 0,
\]
which as all but the top left block of the associated matrix are zero reduces to the computation
\[
	h \left( \frac{\partial}{\partial t}[K, J] 
	- [K, \frac{\partial J}{\partial t}] 
	- [\frac{\partial K}{\partial t}, J]\right) = 0,
\]
as can be seen by expanding the commutators using Leibniz rule.
\end{proof}

We are now in a position to relate the classical and post-Lie Magnus expansions.

\begin{theorem}[Geometric Magnus expansion]
Let $\mathcal{A}\in\mathfrak{t}$ be associated to the $GL(n;\mathbb{R})$-valued function $A(t)$ as per (\ref{Abar}), i.e.
\begin{align*}
\mathcal{A}(t) &= \begin{pmatrix}
A(t) & 0 & 0 \\
0 & 0 & 1 \\
0 & 0 & 0
\end{pmatrix}.
\end{align*}
The solution of the equation defining the post-Lie Magnus expansion in the post-Lie algebra $(\mathfrak{t},[\cdot ,\cdot ],\tr)$,
\[
	\dot{\theta}({\mathcal{A}})(\tau) = \mathrm{dexp}^{\centerdot -1}_{\theta({\mathcal{A}})} 
	\big( \exp^\centerdot \big(\theta({\mathcal{A}})\big)\tr {\mathcal{A}} \big)(\tau) ,
\]
evaluated at initial time $t=0$, is given by 
$$
	\dot{\theta}({\mathcal{A}})(\tau)(0)= \begin{pmatrix}
							\Omega(A)(\tau) & 0 & 0\\
							    0 & 0 & 1 \\
							    0 & 0 & 0
			     				\end{pmatrix},
$$ 
where $\Omega(A)$ is the classical Magnus expansion.
\end{theorem}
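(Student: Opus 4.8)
The plan is to use the block structure of $\mathfrak t$ to split the post-Lie Magnus recursion of Lemma \ref{lem:postLieMag} into a scalar ($\mathfrak{aff}(1)$) part and a matrix ($\mathfrak{gl}(n)$) part, and to recognise the latter as the classical Magnus equation \eqref{cme:omega}. I would write each $v\in\mathfrak t$ as $v=v_0 e_0+V$, with constant scalar part $v_0\in\mathbb R$ and matrix block $V=V(t)\in\mathfrak{gl}(n)$. By \eqref{tconn} the connection reads $v\tr w=v_0\dot W$ on matrix blocks, with vanishing scalar block, while $[v,w]_{\mathfrak t}$ has matrix block $[V,W]$ and vanishing scalar block. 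In particular $e_0$ is central for $[\cdot,\cdot]_{\mathfrak t}$, so in the enveloping algebra $(U(\mathfrak t),\centerdot)$ it splits off as a commuting exponential factor, and elements of $\mathfrak t_0:=\{v\in\mathfrak t : v_0=0\}$ act as the zero derivation, $V\tr(\cdot)=0$.

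The technical core is evaluating $\exp^\centerdot(\theta)\tr\mathcal A$ with $\theta=\theta_0 e_0+\Theta$. Using the lift relations $u\tr(A\centerdot B)=(u\tr A)\centerdot B+A\centerdot(u\tr B)$ and $(u\centerdot A)\tr B=u\tr(A\tr B)-(u\tr A)\tr B$, I would first show by induction that $e_0^{\centerdot n}\tr\mathcal A$ has matrix block $\partial_t^n A$ and vanishing scalar block (the associator correction $(e_0\tr e_0^{\centerdot(n-1)})\tr\mathcal A$ drops out because $e_0\tr e_0=0$). The same relations, together with $V\tr(\cdot)=0$ for $V\in\mathfrak t_0$, show that any $\centerdot$-monomial containing a factor from $\mathfrak t_0$ is annihilated by $\tr\mathcal A$. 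Since $e_0$ is central, $\theta^{\centerdot n}=\sum_k\binom{n}{k}\theta_0^k\,e_0^{\centerdot k}\centerdot\Theta^{\centerdot(n-k)}$ and only the $k=n$ term survives, contributing $\theta_0^n\partial_t^n A$. Summation gives the Taylor shift: $\exp^\centerdot(\theta)\tr\mathcal A$ has matrix block $\sum_{n\ge0}\tfrac{\theta_0^n}{n!}\partial_t^n A=A(t+\theta_0)$ and scalar block $1$, the latter coming solely from the $n=0$ term $\mathcal A$.

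Next I would project the recursion $\dot\theta=\mathrm{dexp}^{\centerdot-1}_{\theta}(\exp^\centerdot(\theta)\tr\mathcal A)$ onto the two blocks. Because $[\theta,\cdot]_{\mathfrak t}$ has vanishing scalar block, every term of $\mathrm{dexp}^{\centerdot-1}_{\theta}$ beyond the zeroth leaves the scalar block untouched, so $\dot\theta_0=1$, giving $\theta_0(\tau)=\tau$ and the entry $1$ in the $\mathfrak{aff}(1)$-block of the claimed matrix. On the matrix block $\mathrm{ad}^\centerdot_{\theta}$ is the ordinary commutator $\mathrm{ad}_{\Theta}$, so $\mathrm{dexp}^{\centerdot-1}_{\theta}$ restricts to the classical matrix $\mathrm{dexp}^{-1}_{\Theta}$. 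Evaluating at $t=0$ and using $\theta_0=\tau$ together with the previous paragraph ($A(t+\theta_0)|_{t=0}=A(\tau)$), the matrix block closes into
\[
\tfrac{d}{d\tau}\Theta(0,\tau)=\mathrm{dexp}^{-1}_{\Theta(0,\tau)}\big(A(\tau)\big),\qquad \Theta(0,0)=0,
\]
which is precisely the classical Magnus equation \eqref{cme:omega}. Its solution is the classical Magnus expansion, $\Theta(0,\tau)=\Omega(A)(\tau)$, which is exactly the top-left block in the statement.

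The main obstacle is the enveloping-algebra computation of the second paragraph: one must control the non-associative lifted action of the group-like element $\exp^\centerdot(\theta)$ and verify rigorously that the associator corrections $(u\tr A)\tr B$ never reintroduce a surviving $e_0$-translation once a matrix-block factor is present, so that the matrix-block part $\Theta$ of $\theta$ genuinely drops out and only the Taylor shift in $\theta_0$ remains. Once this is established, the remainder is a direct blockwise specialisation of Lemma \ref{lem:postLieMag} and the explicit connection \eqref{tconn}, and the identification with \eqref{cme:omega} is immediate.
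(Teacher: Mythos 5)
Your proof is correct, but the central computation is carried out by a genuinely different route than the paper's. The paper first invokes the identity \eqref{exprelation1} to replace the unknown exponential, writing $\exp^\centerdot(\theta(\mathcal{A})(\tau))\tr\mathcal{A} = \exp^*(\tau\mathcal{A})\tr\mathcal{A}$, and then uses \eqref{product} to reduce this to the iterated products $\sum_{n\ge 0}\frac{\tau^n}{n!}\ell^{(n)}_{\mathcal{A}\tr}(\mathcal{A})$, each of which is read off from \eqref{tconn} as $\tau^n\partial_t^nA$ (equation \eqref{beauty}); the Taylor shift $A(t+\tau)$ then appears with no enveloping-algebra bookkeeping and no need to know $\theta$ in advance. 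You instead attack $\exp^\centerdot(\theta)\tr\mathcal{A}$ head-on: you use centrality of $e_0$ for $[\cdot,\cdot]_{\mathfrak t}$, the annihilation property $V\tr(\cdot)=0$ for $V\in\mathfrak t_0$, and an induction in $(U(\mathfrak t),\centerdot)$ to show that only the pure $e_0$-powers survive, and you then recover the translation parameter $\theta_0(\tau)=\tau$ separately from the scalar component of the ODE. The lemma you flag as the main obstacle --- that every $\centerdot$-monomial containing a $\mathfrak t_0$-letter is killed by $\tr\mathcal{A}$ --- is true and does close by induction on word length, since $e_0\tr(\cdot)$ acts as a derivation preserving $\mathfrak t_0$-letters and $e_0\tr e_0=0$, while a leading $\mathfrak t_0$-letter annihilates outright; so your argument is complete, just heavier. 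What your route buys is self-containedness: you do not need \eqref{exprelation1} from Lemma \ref{lem:postLieMag} as an input, only the post-Lie axioms and the lift to the enveloping algebra. What the paper's route buys is brevity: the substitution $\exp^\centerdot(\theta(\mathcal{A})(\tau))=\exp^*(\tau\mathcal{A})$ eliminates the unknown from the hard side of the equation in one stroke. From the identification of the argument as $\mathcal{A}(t+\tau)$ onward --- evaluation at $t=0$, the observation that $\mathrm{dexp}^{\centerdot-1}_{\theta}$ expands in $[\cdot,\cdot]_{\mathfrak t}$ and hence restricts to the classical $\mathrm{dexp}^{-1}_{\Theta}$ on the top-left block, and the reduction to \eqref{cme:omega} --- your argument and the paper's coincide.
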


\begin{proof}
We begin by noting that the identity \eqref{exprelation1} allows us to write $\exp^\centerdot(\theta(\mathcal{A})(\tau))\tr \mathcal{A} = \exp^\ast(\tau\mathcal{A})\tr \mathcal{A}$. Using \eqref{product} we find that 
$$
	\exp^\ast(\tau\mathcal{A})\tr \mathcal{A} = 
			  \mathcal{A}
			+ \tau\mathcal{A} \tr \mathcal{A} 
			+\tau^2 \frac{1}{2}\mathcal{A} \tr (\mathcal{A} \tr \mathcal{A}) 
			+ \tau^3\frac{1}{6}\mathcal{A} \tr (\mathcal{A} \tr (\mathcal{A} \tr \mathcal{A})) + \cdots.
$$
By the definition of the connection $\tr$, we have for all $n\geq 1$
\begin{equation}
\label{beauty}
	\ell^{(n)}_{\tau \mathcal{A}\tr}(\mathcal{A})(t) = \tau
		\mathcal{A} \tr ( \ell^{(n-1)}_{\tau \mathcal{A}\tr}(\mathcal{A}))(t)=
 				\begin{pmatrix}
					{\tau}^{n}\frac{\partial^{n} A}{\partial t^{n}}(t) & 0 & 0\\
					    0 & 0 & 0 \\
					    0 & 0 & 0
			    	 \end{pmatrix},
\end{equation} 
from which it follows that
\[
	(\exp^\ast(\tau{\mathcal{A}})\tr {\mathcal{A}})(t) = \begin{pmatrix}
							\sum_{k \ge 0} \frac{\tau^k}{k!}A^{(k)}(t) & 0 & 0\\
							    0 & 0 & 1 \\
							    0 & 0 & 0
			     				\end{pmatrix}.
\]
In particular, $\exp^\ast(\tau{\mathcal{A}})\tr {\mathcal{A}}$ is the vector field which evaluates to ${\mathcal{A}}(t+\tau)$ at time $t$. Now $\theta({\mathcal{A}})(\tau)$ is a vector field in $\mathfrak{t}$ (see Lemma \ref{lem:Liesubalgebra}), which we evaluate at time $t=0$ (the position in space is unimportant due to (quasi-)right-invariance defined in Definition \ref{def:quasiright}). By the above, the differential equation may then be re-written
\[
	\dot{\theta}({\mathcal{A}})(\tau)(0) 
	= \mathrm{dexp}^{\centerdot -1}_{\theta(\mathcal{A})}( \mathcal{A})(\tau)(0).
\]
By expanding the series for $\mathrm{dexp}^{\centerdot -1}$ in the Lie bracket on $\mathfrak{t}$, it follows that the component of $\theta({\mathcal{A}})(\tau)$ in $\mathfrak{aff}(1)$ is simply $e_0$, and hence the equation for the remaining component in $GL(n;\mathbb{R})$ reduces to the classical Magnus equation (\ref{cme:omega}).
\end{proof}


\section{Runge--Kutta versus Magnus integration methods}
\label{sect:RKMKMag}

The post-Lie Magnus expansion is a key ingredient in Runge--Kutta--Munthe-Kaas (RKMK) schemes for the numerical solution of differential equations on homogeneous spaces \cite{EFLMK15}. In this section we illustrate by means of examples that Magnus integrators may be seen as special instances of RKMK methods applied to equations of the type
\[
	\dot{Y} = A(t) Y,\quad Y(0)=Y_0,
\]
where $A,Y \in\mathbb{R}^{m\times m}$. We recall that when applying an RKMK method to such an equation, the internal stages (with $i=1,\ldots,s$) take the form
\begin{eqnarray}
		 u_i &=& h \sum_{j=1}^s a_{ij} \tilde{f}_j \label{ustages} \\
	\tilde{f}_i &=& \mathrm{d}\!\exp^{-1}_{u_i} A(t_0 + hc_i) \nonumber \\
		      &=& A(t_0 + hc_i) - \frac{1}{2}[u_i, A(t_0 + hc_i)] + \cdots, \label{fstages}
\end{eqnarray}
and the scheme progresses by
\begin{equation} \label{update}
	Y_1 = \exp\Big(h\sum_{i=1}^s b_i \tilde{f}_i\Big)Y_0,
\end{equation}
where $a_{ij},b_i,c_i$ form the Butcher tableau of the underlying RK scheme. We evaluate the consequences of two two-stage methods: the Heun scheme and the two stage Gauss--Legendre method. The corresponding tableaux are:
\[
	\begin{array}{r|rr}
	0 & 0 & 0 \\
	1 & 1 & 0 \\
	\hline\\[-0.3cm]
	& \frac{1}{2} & \frac{1}{2}
	\end{array}
	\qquad
	\begin{array}{r|rr}
	\frac{1}{2}-\omega & \frac{1}{4} & \frac{1}{4}-\omega \\[0.1cm]
	\frac{1}{2}+\omega & \frac{1}{4}+\omega & \frac{1}{4} \\[0.1cm]
	\hline\\[-0.3cm]
	& \frac{1}{2} & \frac{1}{2}
	\end{array}
\]
where Heun is on the left and Gauss--Legendre on the right, with $\omega=\frac{\sqrt{3}}{6}$.\\
\newline
\smallskip\textbf{Heun scheme}\\
This method is explicit. Indeed we will begin by noting that $u_1=0$, so that
\[
	\tilde{f}_1 = \mathrm{d}\!\exp^{-1}_{0}(A)(t_0) = A(t_0).
\]
We then find that $u_2 = h A(t_0)$, and hence
\[
	\tilde{f}_2 = \mathrm{d}\!\exp^{-1}_{h A(t_0)}(A)(t_1) \approx A(t_1) - \frac{h}{2}[A(t_0),A(t_1)].
\]
Using the two term approximation for $\mathrm{d}\!\exp^{-1}$ above, we therefore obtain the integration scheme
\[
	Y_1 = \exp\Big(\frac{h}{2}(A_0 + A_1) - \frac{h^2}{4}[A_0,A_1]\Big)Y_0,
\]
where $A_0 := A(t_0)$ and $A_1 := A(t_1)$. It can be shown that this is the Magnus integrator obtained from taking the first two terms in the Magnus series and approximating the integrals by the trapezoidal rule.\\
\newline
\smallskip\textbf{Gauss--Legendre scheme}\\
Here the situation is somewhat more complicated, as the method is implicit. Indeed, for simplicity write $\tau_i = t + hc_i$, $i=1,2$. We will then use the shorthand $A_i := A(\tau_i)$, giving the system
\begin{eqnarray*}
		u_1 &=& \frac{h}{4}\tilde{f}_1 + h(\frac{1}{4}-\omega)\tilde{f}_2 \\
		u_2 &=& h(\frac{1}{4}+\omega)\tilde{f}_1 + \frac{h}{4}\tilde{f}_2 \\
	\tilde{f}_1 &=& \mathrm{d}\!\exp^{-1}_{u_1} A_1 \\
	\tilde{f}_2 &=& \mathrm{d}\!\exp^{-1}_{u_2} A_2.
\end{eqnarray*}
If we truncate the series for $\mathrm{d}\!\exp^{-1}$ to two terms once again, we find
\begin{eqnarray*}
	\tilde{f}_1 &=& A_1 - \big[\frac{h}{4}\tilde{f}_1 + h(\frac{1}{4}-\omega)\tilde{f}_2, A_1\big] \\
	\tilde{f}_2 &=& A_2 - \big[h(\frac{1}{4}+\omega)\tilde{f}_1 + \frac{h}{4}\tilde{f}_2,A_2\big].
\end{eqnarray*}
Suppose we solve the above system by a single function iteration starting from the first order approximation $\tilde{f}_i = A_i$. This gives the explicit forms
\begin{eqnarray*}
	\tilde{f}_1 &=& A_1 + \frac{h}{2}(\frac{1}{4}-\omega)[A_1, A_2] \\
	\tilde{f}_2 &=& A_2 - \frac{h}{2}(\frac{1}{4}+\omega)[A_1,A_2].
\end{eqnarray*}
The resulting integration scheme is (inserting the value $\omega=\frac{\sqrt{3}}{6}$)
\[
	Y_1 = \exp\Big(\frac{h}{2}(A_1 + A_2) - \frac{h^2\sqrt{3}}{12}[A_1,A_2]\Big)Y_0,
\]
which is identical to the standard order four Magnus integrator obtained from the two point Gauss quadrature \cite{BCOR2009}.


\bigskip\noindent\textbf{Continuous stage methods}\\
In his monograph from 1987, Butcher \cite[Section 385]{Butcher87} suggested to study the exact solution of an ordinary differential equation obtained by applying a continuous stage Runge--Kutta method. Sums are replaced by integrals, this means that rather than indexing the stages $u_i, \tilde{f}_i$ in (\ref{ustages}--\ref{fstages}), one may consider a continuum of stages $u(\tau), \tilde{f}(\tau)$, $\tau \in[0,1]$ and introduce continuous Runge--Kutta coefficients $a(\tau,\sigma), b(\tau)$. 
By choosing 
\begin{equation} 
\label{excontstage}
	b(\tau)\equiv 1,\quad
	a(\tau,\sigma)=\left\{\begin{array}{ll}1 & 0\leq\sigma\leq \tau \\ 0 & \text{otherwise} \end{array}\right.\
\end{equation} 
the resulting Runge--Kutta scheme will produce the exact solution to the ordinary differential equation. 

More recently, there has been a growing interest in studying such continuous stage schemes in their own right, for instance, the averaged vector field method (see, e.g., \cite{COS2014}) can be written in such a format, and more general studies of continuous stage Runge--Kutta methods have been undertaken by several authors (see, e.g., \cite{TS2014}). By inserting the coefficients of a continuous stage method into (\ref{ustages}--\ref{update}) and eliminating $\tilde{f}(\tau)$ we obtain
\begin{align*}
	u(\tau) &= h\int_0^1 a(\tau,\sigma) \mathrm{d}\!\exp^{-1}_{u(\sigma)}  A(t_0+hc(\sigma))\,\mathrm{d}\sigma \\
		v &= h\int_0^1 b(\tau)  \mathrm{d}\!\exp^{-1}_{u(\tau)}  A(t_0+hc(\tau))\,\mathrm{d}\tau \\
	    Y_1 &= \exp(v)\cdot Y_0,
\end{align*}
where $c(\tau)=\int_0^1 a(\tau,\sigma)\,\mathrm{d}\sigma$. It is perhaps no surprise that if $a(\tau,\sigma)$ and $b(\tau)$ are chosen as in \eqref{excontstage} one finds that $u(\tau)\equiv \Omega(A)(t_0+\tau h)$ in \eqref{cme:omega} as well as $v=\Omega(A)(t_0+h)$ so the exact Magnus series expansion is recovered. One may also note that deformations of the Magnus expansion can be obtained by choosing other coefficient functions.


\end{document}